\newtheorem{thm}{Theorem}[section]
\newtheorem{lem}[thm]{Lemma}
\newtheorem{coro}[thm]{Corollary}
\newtheorem{prop}[thm]{Proposition}
\theoremstyle{definition}
\newtheorem{ex}[thm]{Example}
\newenvironment{newlist}
   {\begin{list}{}{\setlength{\labelsep}{0.25cm}
                   \setlength{\labelwidth}{0.65cm}
                      \setlength{\leftmargin}{0.9cm}}}
   {\end{list}}
\newcommand{\class}[1]{\boldsymbol{\mathcal{#1}}}
\newcommand{\cat}[1]{\boldsymbol{\mathscr{#1}}}
\newcommand{\alg}[1]{\mathbf{#1}}
\newcommand{\fnt}[1]{\mathsf{#1}}
\newcommand{\ope}[1]{\mathbb{#1}}
\newcommand{\duplicateV}{\class A}
\newcommand{\baseV}{\class B}
\newcommand{\CM}{\class M}
\newcommand{\CN}{\class N}
\newcommand{\unbounded}[1]{#1\mbox{\tiny{$u$}}}
\newcommand{\CL}{{\cat L}}   
\newcommand{\CLU}{\unbounded{\CL}}  
\newcommand{\CB}{\cat B}
\newcommand{\CGB}{\cat{GB}} 
\newcommand{\DM}{\cat{DM}} 
\newcommand{\KL}{\cat{K}} 
\newcommand{\CCD}{\cat D}
\newcommand{\CCDU}{\unbounded{\CCD}}
\newcommand{\TL}{{\cat{DT}}}  
\newcommand{\IT}{\cat{IT}}
\newcommand{\DBC}{\cat{DBC}}
\newcommand{\DB}{\cat{DB}}
\newcommand{\BL}{\cat{BL}}
\newcommand{\DBCU}{\unbounded{\DBC}}
\newcommand{\DBU}{\unbounded{\DB}}
\newcommand{\BLU}{\unbounded{\BL}}
\newcommand{\DMU}{\unbounded{\DM}}
\newcommand{\krB}{\BL_{\to_k}}
\newcommand{\trB}{\BL_{\to_t}}
\newcommand{\Sigmavar}[1]{\Sigma_{\scriptscriptstyle #1}}
\newcommand{\Gammavari}[1]{\Gamma_{\scriptscriptstyle #1}}
\newcommand{\A}{\alg{A}}
\newcommand{\B}{\alg{B}}
\newcommand{\C}{\alg{C}}
\newcommand{\M}{\alg{M}}
\newcommand{\N}{\alg{N}}
\newcommand{\PP}{\alg{P}}  
\newcommand{\Q}{\alg{Q}}
\newcommand{\two}{\boldsymbol 2}
\newcommand{\Lalg}{\alg{L}}
\newcommand{\three}{\boldsymbol 3}
\newcommand{\four}{\boldsymbol 4}
\newcommand{\Nine}{\boldsymbol 9}
\newcommand{\NineS}{\Ninevar {/}}
\newcommand{\twovar}[1]{\two_ {\scriptscriptstyle #1}}
\newcommand{\threevar}[1]{\three_ {\scriptscriptstyle #1}}
\newcommand{\fourvar}[1]{\four_ {\scriptscriptstyle #1}}
\newcommand{\Ninevar}[1]{\Nine_ {\scriptscriptstyle #1}}
\newcommand{\Svar}[1]{{\bf 16}_ {\scriptscriptstyle #1}}
\DeclareMathOperator{\ISP}{\ope{ISP}}
 \DeclareMathOperator{\HSP}{\ope{HSP}}
\DeclareMathOperator{\aaimp}{\supset}
\newcommand{\fitimp}{\mathrel{:}} 
\newcommand{\ldiv}{\mathrel{\backslash}} 
\newcommand{\rdiv}{\mathrel{/}} 
 \DeclareMathOperator{\OurSlash}{\slash}
  \DeclareMathOperator{\Con}{Con}
\newcommand{\odotG}[1]{\mathrel{{\odot}_{#1}}} 
\newcommand{\dupl}{\llcurly}
\renewcommand{\leq}{\leqslant}
\renewcommand{\geq}{\geqslant}
\begin{document}

\title[A general framework for product representations]{A general framework for 
product representations: bilattices and beyond}

\author{L.M. Cabrer}

\address[L.M. Cabrer]{Dipartimento di Statistica, Informatica, Applicazioni\\
 Universit\`a degli Studi di Firenze\\
59 Viale Morgani, 50134\\
Florence, Italy
}
\email{l.cabrer@disia.unifi.it}

\author{H.A. Priestley}
\address[H.A. Priestley]{
Mathematical Institute\\
University of Oxford\\
 Radcliffe Observatory Quarter,
Oxford, OX2 6GG, United Kingdom}
\email{hap@maths.ox.ac.uk}
\keywords{ Product representation,  bilattice,  trilattice, conflation, De Morgan algebra.}

\begin{abstract}
This paper studies algebras arising as algebraic semantics 
 for logics used to model reasoning 
with incomplete or inconsistent information.  In particular we study, 
in a uniform way,
varieties of bilattices
equipped with additional logic-related operations and their product representations.   
 Our principal result is 
a
 very general product representation theorem.
 Specifically, we present a syntactic 
procedure (called \emph{duplication}) for building a product algebra out of  a given base algebra
and a given set of terms. The procedure 
lifts functorially to the generated varieties and leads, under specified
sufficient conditions, to a categorical equivalence between these varieties.
When these conditions are satisfied, 
a very tight algebraic relationship exists 
between the  base variety and  the enriched variety.  
Moreover varieties arising as duplicates of a common base variety are
automatically categorically equivalent to each other.  
Two further product representation constructions are also presented; these
are in the same spirit as our main theorem and extend the scope of our analysis.

Our catalogue of applications selects varieties for which 
product representations have  previously been obtained  one by one, or which are new. 
We also reveal that certain varieties arising from the modelling of quite
different operations are categorically equivalent. 
Among the  range of examples presented,
 we draw attention in particular to our systematic
treatment of trilattices.
\end{abstract}

\subjclass[2010]{Primary: 
06B05,  
Secondary: 
03G25,  
06D05,  
08A05,  
08C05  
}
\maketitle

\section{Introduction} \label{sec:intro}

The notion of product representation plays a central role in  the study of interlaced bilattices, 
with and without any or all of bounds, negation and additional operations 
(see \textit{inter alia} \cite{Av96,MPSV,RPhD,BJR11,BR13,JR,D13}).
Such algebraic structures have been identified by researchers in artificial intelligence and in philosophical logic 
as of value for analysing  scenarios in which  information may be incomplete or inconsistent.  
The literature in the area is now very extensive.
Following the introduction of bilattices by Ginsberg~\cite{Gin},  
various associated logical systems  were proposed and studied, 
{\it inter alia} by Belnap~\cite{ND3}, Fitting~\cite{F91,FKlgen,Fnice}, 
Avron and Arieli~\cite{AA1} 
and,  more recently, 
by Rivieccio, alone  and in collaboration with Bou and 
Jansana~\cite{RPhD,BR11,R13};
note also the survey by Gargov~\cite{G99}.    
 Moreover, much research  has been done on  algebraic structures having bilattice reducts 
(for example bilattices with an additional operation such as a modality or an
 implication~\cite{GinMod,AA2,BJR11,BR13,R14})
and  also trilattices \cite{Wi1995,SDT,SW}. 
 A bewildering proliferation of examples 
has resulted, with most of the analysis done on a case-by-case basis. 

Our objective in this paper  is to develop an abstract  framework for product representations.  
Our principal result is  Theorem~\ref{thm:GeneralProductEquiv}.  
Our treatment  scores over 
the traditional one in three ways.
Firstly,  product representation theorems have 
traditionally
 been obtained on  a case-by-case basis,
whereas our theorem applies  in a uniform way to many varieties,  
as we shall see in  Sections~\ref{sec:Conflation}--\ref{Sec:Examples}. 
Secondly, the theorem splits the construction  of
a product representation for a variety $\duplicateV$ into two parts.
First we identify a set $\CM$ of algebras (frequently  a single algebra)  that
generates  
$\duplicateV$. 
We then set up the product representation just for the members of  $\CM$.
Then Theorem~\ref{thm:GeneralProductEquiv} automatically 
proves that each element of  $\duplicateV$ admits a product representation. 
Thirdly, the theorem supplies  a categorical equivalence from the outset;
in the literature 
product representation theorems have often been  given 
only at the object level and,   
where such representations  were upgraded 
to categorical equivalences, considerable effort had to be 
 expended for each individual class.

We now present in a little more detail the idea underlying our approach.
Consider  two classes of algebras:   $\duplicateV$, a variety we wish to analyse,  
and a base variety $\baseV$, which we assume to be of the form
$\baseV=\mathbb{V}(\N)$,  the variety generated by some algebra $\N$.
 (The single algebra $\N$ above could be replaced by a class $\CN$ of algebras of common type.) 
Then, when suitable conditions are satisfied, we can `duplicate' $\N$
 to construct  an algebra $\M:= \fnt{P}_{\Gamma}(\N)$  in~$\duplicateV$.
Here the universe of  $\M$ is $N \times N$, where~$N$ is the universe of~$\N$.
The operations in the product are built from $\Gamma$,
a set of pairs of algebraic terms in the base language (that of~$\baseV$),  
used to define certain operations coordinatewise, and are 
combined with coordinate manipulation to link the factors.  
 The set $\Gamma$ is 
called a \emph{duplicator} (for $\baseV$).
Moreover  the duplication construction 
lifts to a category equivalence between the base variety 
$\baseV=\mathbb{V} (\N)$ and the variety 
$\mathbb{V}(\M)$.  In practice, the latter 
is likely to be
 the variety $\mathbb{V}(\fnt{P}_{\Gamma}(\CN))$ we are interested in. 
The mechanism of duplication 
is rooted in the manipulation of terms in an abstract algebraic language.  
Indeed, from this perspective  product representations can be seen to
arise   just from a glorified form of term-equivalence
 (see the discussion before Theorem~\ref{thm:mPR}).  
We stress that the  construction 
does not depend on the specific algebraic language 
of the base class nor that of the duplicated one but only on the relation between their two languages.
We shall follow  the 
literature on product representations
in confining  our examples to  varieties  of bilattice-based algebras.
However  
 the scope of Theorem~\ref{thm:GeneralProductEquiv} 
is not  restricted  to such classes. 

As we shall demonstrate in Sections~\ref{sec:Conflation}--\ref{Sec:Examples},
distributive lattices, Boolean algebras, Heyting algebras, distributive bilattices, 
and De Morgan algebras will serve as base varieties in this way, 
as do their unbounded analogues. 
 The duplicated varieties carry, besides operations from the base language, 
operations which are order-preserving or order-reversing unary involutions;
implication-like operations;  assorted other
logic-driven unary and binary operations; further pairs of lattice operations. 
We stress that the duplication formalism helps  guide us to the product representations we seek.  
To illustrate the point,   we contrast  our treatment of 
distributive bilattices with conflation  in Section~\ref{sec:Conflation} 
with Fitting's account in~\cite{FKlkids} and note our remarks on implicative bilattices  (Example~\ref{ex:implic}).

The generalised form of  product representation 
 given in Section~\ref{Sec:Conclusion} 
takes its cue from two varieties:  pre-bilattices 
(not covered by Theorem~\ref{thm:GeneralProductEquiv}) and interlaced trilattices
(covered, but only by carrying out a two-stage 
duplication).
In an appendix we bring our multitude of examples  together in two tables.
Table~\ref{table1} lists bilattice-based varieties and the base varieties they duplicate, 
and so highlights the categorical equivalences revealed by our analysis.  
Table~\ref{table2}  systematises  the product representations available for
interlaced  bilattices, for 
interlaced trilattices and for interlaced trilattices augmented with one, two or three 
involutory operations.

This work has grown out of
our study of natural dualities for bilattices and their connection with product representations 
\cite{CCP,CPOne}. In \cite{TwoPlus} we return to the duality theme  and set  
up  an automatic procedure to obtain natural dualities for 
classes of algebras that fit into   the general framework 
for product representations presented in this paper.


\section{Preliminaries on bilattices and product representation}\label{Sec:Preliminaries}

 Our investigations involve classes of algebras.  Accordingly  we shall draw
on some of the basic formalism of universal algebra, specifically regarding
algebras, terms and varieties ({\it alias} equational classes); a standard reference is \cite{BS}; see also 
\cite[Chapter~I]{BD} 
for a categorical perspective.
We write $\mathbb V(\CN)$ to denote the variety generated by  a family
$\CN$ of algebras having a  common language. 
Equivalently $\mathbb V(\CN)$ is the class  
$\HSP(\CN)$  of homomorphic images of subalgebras of products of algebras in $\CN$.
We often encounter
 classes such that $\HSP(\CN)=\ISP(\CN)$,  
the class of isomorphic images of subalgebras of products of algebras in~$\CN$. 
We note the elementary but useful fact that
 an algebra $\A$ belongs to $\ISP(\CN)$ if and only if 
the family of homomorphisms from~$\A$ into the algebras in $\CN$ separates the elements of $\A$.
Most often in our investigations
$\CN$ will contain a single algebra~$\N$. 
When this is the case, to simplify the notation, we write~$\N$ instead of $\{\N\}$.
A class of algebras of common language will be regarded as a category in the usual way: 
we take morphisms all homomorphisms.

The algebras we consider as examples will be lattice-based, that is, they 
have reducts in the variety  $\CLU$ of all lattices, with basic operations 
$\lor$ and $\land$.   Here the subscript $_u$ indicates that the lattices are \emph{unbounded}
 in the sense that bottom  and top elements for the underlying order, even when these exist, 
 are not included in the language. 
 We write $\CL$ for the variety of  bounded lattices, 
 {\it viz.}~algebras
$ (L;\lor,\land,0,1)$,  where $(L;\lor,\land) \in \CLU$, and $0,1$ are respectively,
bottom and top elements for the underlying order on~$L$. 
 For any lattice $\Lalg$, unbounded or bounded,  we write $\Lalg^\partial$
  to denote the lattice on the same underlying set, but with the order and bounds (when 
present) reversed.  

We now turn  to bilattices.
We shall assume that readers are familiar with the basic notions;
summaries can be found, for example, in \cite{RPhD, BJR11}.  
Here we establish notation  and  terminology, and 
make only a few comments to set the scene for our study. 
 An \emph{{\upshape(}unbounded{\upshape)}  pre-bilattice}
$\A = (A; \lor_t,\land_t,\lor_k,\land_k)$ is an algebra for which 
$(A;\lor_t,\land_t)$ and $(A;\lor_k,\land_k)$ belong to $\CLU$.  
Here the subscripts $_t$ and $_k$ have the connotation  
of `truth' and `knowledge' and refer to the associated lattices $\A_t$ and $\A_k$
 as the truth and knowledge lattices of $\A$; the corresponding 
  lattice orders are denoted by $\leq_t$ and $\leq_k$.  
Analogous definitions can be formulated in the bounded case.
Here we follow the notation we used in \cite{CPOne} and choose 
 to deviate from that adopted in recent bilattice literature, 
 in which the truth operations are denoted $\lor$ and $\land$ and the knowledge operations by
$\oplus$ and $\otimes$.     

Here, as in \cite{RPhD,BJR11} and elsewhere, the term \emph{bilattice}
is reserved for an algebra $\A$ which is a pre-bilattice enriched with  
a negation operation $\neg$, which is required  to be
an involution  that preserves $\leq_k$ and reverses 
$\leq_t$. 
We shall normally   
assume that 
a negation operator is present, and  delay until
 Section~\ref{Sec:Conclusion} the adaptation  of our approach to  
encompass also the product representation for pre-bilattices.
Unlike negation, whose inclusion or omission leads to significantly  different 
outcomes, whether or not the algebraic language includes nullary operations 
interpreted as lattice bounds is largely a matter of choice,  governed 
for example by the logic being modelled.  Thus we are ambivalent about
constants, sometimes including them and sometimes not; the 
adaptations required for the other case are generally minor.

 An interaction between the lattice operations $\lor_t$, $\land_t$ and  $\lor_k$,
$\land_k$  of a bilattice is needed for a good structure theory.  At a minimum, we need
to impose the condition of \emph{interlacing}, asserting that the operations
in $\{\lor_t, \land_t\}$  and in $\{\lor_k, \land_k\}$ are monotonic with respect to 
$\leq_k$ and $\leq_t$, respectively.  Interlacing is both necessary and sufficient 
for the existence of a product representation (see \cite{RPhD} and also \cite{D13}). 
 We write $\BLU$ and $\BL$ for the varieties of unbounded and bounded interlaced bilattices, respectively.
We recall the product representation for interlaced unbounded bilattices. Given a lattice
 $\Lalg  =(L; \lor,\land)$, then 
$\Lalg \odot \Lalg $ denotes the bilattice with  universe 
$L \times L$ and lattice operations given by
\begin{alignat*}{2}
(a_1,a_2) \lor_t (b_1,b_2)  & = (a_1 \lor b_1, a_2 \land b_2), \qquad & 
(a_1,a_2) \lor_k (b_1,b_2)  & = (a_1 \lor b_1, a_2 \lor b_2), \\
(a_1,a_2) \land_t (b_1,b_2)  & = (a_1 \land b_1, a_2 \lor b_2), \qquad &
(a_1,a_2) \land _k (b_1,b_2)  & = (a_1 \land b_1, a_2 \land b_2);
\end{alignat*}
negation is given by $\neg(a,b) = (b,a)$.
The Product Representation Theorem for unbounded interlaced bilattices
states that,
given $\A \in \BLU$,   
there exists $\Lalg= (L; \lor,\land) \in \CLU$ such that $\A \cong \Lalg \odot \Lalg$.

We can see that  the operations of $\Lalg \odot \Lalg $ are constructed from the operations of
 $\Lalg$ just by manipulating coordinates and applying to them the operations in $\Lalg$. 
 This simple observation is the starting point for the results of this paper, as outlined in Section~\ref{sec:intro}.

\section{Algebraic framework for product representations}\label{Sec:ProdRep}

In this section we set up our general algebraic-categorical framework.
We assume given a variety  $\duplicateV$ of algebras for which 
we desire a product representation theorem,
and that $\baseV=\mathbb{V} (\CN)$
is a well-behaved and well-understood variety  on which
 we want to base our representation for $\duplicateV$. 
We aim to realise $\duplicateV$ as a variety $\mathbb{V}(\CM)$, 
where $\CM$ is obtained from $\CN$, in the manner 
outlined in Section~\ref{sec:intro}, 
by means of a set $\Gamma$ of pairs of 
terms in the language of   $\baseV$,
except that now do not restrict to singly-generated varieties.

The set $\Gamma$ is used to build 
a product structure
$\M \cong \fnt{P}_{\Gamma}(\N)$ of each algebra $\N\in \CN$. 
 We then seek to show that  $\baseV:=\mathbb{V}(\CN)$ and 
$\mathbb V(\fnt{P}_{\Gamma}(\CN))$ are categorically equivalent, 
with the second variety  being what we call a duplicate of the first 
(the formal definition  is given below).
Two extreme cases naturally arise here: 
$\CN$ is already  our base variety $\baseV$  or $\CN$  may  
contain a single algebra $\N$.  
The former case will arise in practice when $\baseV$ is not finitely generated, 
as occurs for example when $\baseV$ is~$\CL$ or~$\CLU$.
Our   programme will, however,  yield the most powerful results 
in the latter case and when, better still,  
we can show that $\duplicateV$ is generated by  $\fnt{P}_{\Gamma}(\N)$,
for some choice of~$\Gamma$.
In these circumstances Theorem~\ref{thm:GeneralProductEquiv} tells us that 
a product representation of a generator for $\duplicateV$ lifts to a product representation 
applicable to the entire equational class~$\duplicateV$, 
and that this lifting operates functorially.
We then have a very tight relationship  between $\baseV = \mathbb{V}(\N)$
and $\duplicateV= \mathbb{V}(\fnt{P}_{\Gamma}(\N))$;
indeed these varieties are equivalent as categories.  
This is exactly what happens, as we shall
demonstrate later, for many much-studied varieties, and it retrospectively vindicates 
the emphasis in much of the literature  (see for example 
\cite{Gin,GinMod,F91,FKlgen,AA1,RF,AA2}) on individual bilattice-based algebras
as opposed to the classes they generate:  algebraic information not visible 
at the level of the generator becomes instantly accessible, leading to a much richer theory.       

Let $\CN$ be a class of  $\Sigma$-\-alg\-ebras, 
where $\Sigma$ is some algebraic language and let  $\mathbb{V}(\CN)$ be the variety generated by $\CN$. 
Let $\Gamma$ be a set of pairs of $\Sigma$-terms 
such that, for each $(t_1,t_2)\in\Gamma$, 
there exists $n_{(t_1,t_2)}\in\{0,1,\ldots\}$ such that $t_1$ and $t_2$ 
are terms on $2n_{(t_1,t_2)}$ variables.  
We shall  view 
$\Gamma$ as playing  the role of an 
algebraic language for a family of algebras  $\fnt{P}_{\Gamma}(\A)$
($\A \in \mathbb{V}(\CN)$), where the arity of 
 $ (t_1,t_2)\in\Gamma $ is $n_{(t_1,t_2)}$.
We write $[t_1,t_2]$ 
 when we are viewing $(t_1,t_2)$ as belonging to~$\Gamma$, \textit{qua}
language, rather than as a pair of terms from the original language.
 Specifically  we define, for $\A \in \mathbb{V}(\CN)$,  
\[
\fnt{P}_{\Gamma}(\A)
=(A\times A; \{ [t_1,t_2]^{\fnt{P}_{\Gamma}(\A)} \mid (t_1,t_2)\in\Gamma \}),
\]
where, writing 
$n=n_{(t_1,t_2)}$,  
    the operation
$[t_1,t_2]^{\fnt{P}_{\Gamma}(\A)}\colon (A\times A)^{n}\to A\times A$ 
is  given by
\begin{multline*}
[t_1,t_2]^{\fnt{P}_{\Gamma}(\A)}
((a_1,b_1),\ldots,(a_{n},b_{n}))=
(t_1^{\A}(a_1,b_1,\ldots,a_{n},b_{n}),t_2^{\A}(a_1,b_1,\ldots,a_{n},b_{n})),
\\
\text{for }(a_1,b_1),\ldots,(a_{n},b_{n})\in A\times A.
\end{multline*}

We let  $\fnt{P}_{\Gamma}(\CN)$ denote the class of algebras of the form 
$\fnt{P}_{\Gamma}(\N)$, for $\N \in \CN$.  
 It is straightforward to check that 
$\fnt{P}_{\Gamma}(\mathbb{V}(\CN))$ is contained in $\mathbb{V}(\fnt{P}_{\Gamma}(\CN))$. 
We claim that the assignment  $\A\mapsto\fnt{P}_{\Gamma}(\A)$ (on objects)
and $h  \mapsto h \times h$ (on morphisms) defines
a functor $\fnt{P}_{\Gamma}\colon   \mathbb{V}(\CN) 
\to  \mathbb{V}(\fnt{P}_{\Gamma}(\CN))$.
We need to
confirm that $\fnt{P}_{\Gamma}$ is well defined on morphisms. 
Take $\A,\B\in \mathbb{V}(\CN)$ 
and $h\colon \A\to\B$ a homomorphism.  
Since the operations in $\fnt{P}_{\Gamma}(\A)$ and $\fnt{P}_{\Gamma}(\B)$ are constructed using $\Sigma$-terms 
 $h\times h\colon A\times A\to B\times B$ is indeed a homomorphism from $\fnt{P}_{\Gamma}(\A)$  to $\fnt{P}_{\Gamma}(\B)$.
It is routine to check 
that $\fnt{P}_{\Gamma}$ is a functor and is faithful.  

We introduce 
the following notation.
Given  a set $X$ we let $\delta^{X} \colon X\to X\times X$ 
be the diagonal map given by  
$\delta^{X} (x)=(x,x)$ and  let $\pi_1^{X},\pi_2^{X}\colon X\times X\to X$
 be the projection maps; we  suppress the  label when no ambiguity would arise. 

We are  now ready to give an  important definition. 
Fix a class $\CN$  of $\Sigma$-algebras that generates a variety 
$\baseV$  and let $\Gamma$ 
be a set of pairs of terms as specified above.
We say that the variety 
 $\duplicateV=\mathbb{V}(\fnt{P}_{\Gamma}(\baseV))$ 
 is \emph{a duplicate of}  $\baseV$ (in symbols $\baseV\dupl\duplicateV$) 
if $\Gamma$   \emph{duplicates} $\CN$.  By  the latter  we  mean  that
the following conditions on~$\CN$ and~$\Gamma$ are satisfied:
\begin{newlist}
\item[(L)] for each $n$-ary operation symbol $f\in\Sigma$ and $i\in\{1,2\}$ 
there exists an $n$-ary $\Gamma$-term $t$ such that
$\pi^{N}_i\circ t^{\fnt{P}_{\Gamma}(\N)}\circ(\delta^{N})^{n}=f^{\N}$ 
for each $\N\in\CN$;  
\item[(M)] there exists a binary $\Gamma$-term $v$ such that 
\[
v^{\fnt{P}_{\Gamma}(\N)}((a,b),(c,d))=(a,d)  \quad
 \text{ for $\N\in\CN$ and $a,b\in N$;} 
\] 
\item[(P)] there exists a unary $\Gamma$-term $s$ such that \[s^{\fnt{P}_{\Gamma}(\N)}(a,b)=(b,a) \quad \text{  for  
$\N\in\CN$ and 
$a,b\in N$;}
\]
\end{newlist}
Here L,  M and P have the connotations of  language, merging and permutation. 
The role of the term $v$ in 
(M) is to merge pairs and that  of term $s$ in
(P) is to permute the coordinates.
Therefore, if $\N\in\CN$ and $S$ is a subset of $\fnt{P}_{\Gamma}(\N)$ that is closed under $v$, 
 then $\pi_1^{N}(S)=\pi^N_2(S)$. 
If $S$ is closed under $s$, then 
$S=\pi_1^N(S)\times\pi_2^N(S)$. 
It is worth observing that,  if $\Gamma$ satisfies 
(P), then (L) is equivalent to the weaker condition
\begin{newlist}
\item[$(\text{L}'\,)$]
 for each $n$-ary operation symbol $f\in\Sigma$  there exist 
an $n$-ary $\Gamma$-term $t$ and $i\in\{1,2\}$ such that
$\pi^{N}_i\circ t^{\fnt{P}_{\Gamma}(\N)}\circ(\delta^{N})^{n}=f^{\N}$ 
for each $\N\in\CN$.  
\end{newlist}

The algebraic language determined by $\Gamma$ is  obtained by means of the pairs of terms in $\Sigma$. 
Condition (L) works in the reverse direction, as a method to obtain $\Sigma$ from terms in $\Gamma$. 
In Section~\ref{Sec:Conclusion} we elucidate the connection between 
product representation  and term-equivalence.

Illustrations of the duplication mechanism, for various base varieties and with 
a variety   
of duplicators $\Gamma$,  are given in succeeding sections.
We shall thereby bring many varieties within the scope of our main result, Theorem~\ref{thm:GeneralProductEquiv}.
Whether or not an algebra $\M$ on a universe $N \times N$ can be obtained 
as  a duplicate of some $\N$ with universe~$N$
will of course depend on
whether $\Gamma$, satisfying (L), (M) and (P), can be found so that the  
operations of $\M$ and $\N\odotG{\Gamma}  \N$ match up.  
See Example~\ref{ex:DM}  for an illustration of obstacles to duplication.

\begin{thm}\label{thm:GeneralProductEquiv}
Assume that  $\Gamma$ duplicates a class $\CN$
and let $\baseV=\mathbb{V}(\CN)$. 
Then the functor $\fnt{P}_{\Gamma}\colon \baseV\to \duplicateV$
sets up a categorical equivalence between 
 $\baseV$ and 
its duplicate $\duplicateV=\mathbb{V}(\fnt{P}_{\Gamma}(\CN))$.
\end{thm}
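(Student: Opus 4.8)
The plan is to establish the equivalence by exhibiting an explicit quasi-inverse $\fnt{Q}\colon \duplicateV \to \baseV$ to $\fnt{P}_{\Gamma}$ and checking that both composites are naturally isomorphic to the identity; equivalently one shows $\fnt{P}_{\Gamma}$ is full, faithful and essentially surjective. Faithfulness is already in hand. The running principle I would use throughout is a \emph{transfer of identities}: any equation between $\Gamma$-terms is, read coordinatewise, a pair of $\Sigma$-term equations, so if it holds in $\fnt{P}_{\Gamma}(\N)$ for every $\N\in\CN$ then the underlying $\Sigma$-equations hold in $\CN$, hence in $\baseV=\mathbb V(\CN)$, hence in $\fnt{P}_{\Gamma}(\A)$ for every $\A\in\baseV$; moreover such an equation then holds throughout $\duplicateV$, since it holds on the generating set $\fnt{P}_{\Gamma}(\CN)$. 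In particular the swap $s$ of (P) and the merge $v$ of (M) satisfy $s(s(x))=x$, $v(x,x)=x$ and the like everywhere in $\duplicateV$. I would abbreviate $p_1(x):=v(x,s(x))$ and $p_2(x):=v(s(x),x)$, whose interpretations on a generator $\fnt{P}_{\Gamma}(\N)$ send $(a,b)$ to $(a,a)$ and to $(b,b)$ respectively, and record the resulting laws $p_i\circ p_i=p_i$, $s\circ p_1=p_1$, $p_1(v(x,y))=p_1(x)$, $p_2(v(x,y))=p_2(y)$ and $v(p_1(x),p_2(x))=x$.

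For fullness, let $g\colon \fnt{P}_{\Gamma}(\A)\to\fnt{P}_{\Gamma}(\B)$ be a $\Gamma$-homomorphism and write $g=(g_1,g_2)$. Compatibility of $g$ with $v$ forces $g_1$ to depend only on the first coordinate and $g_2$ only on the second, while compatibility with $s$ identifies the two resulting unary maps; this yields a single map $h\colon A\to B$ with $g=h\times h$. That $h$ is a $\Sigma$-homomorphism is exactly where (L) is used: each $f^{\A}$ is recovered as $\pi_1\circ t^{\fnt{P}_{\Gamma}(\A)}\circ(\delta^{A})^{n}$ for a suitable $\Gamma$-term $t$ (this instance of (L) being valid on $\fnt{P}_{\Gamma}(\A)$ by transfer of identities), and since $h\times h$ commutes with $\delta$ and $\pi_1$ and preserves $t$, it preserves $f$.

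For the reverse functor I would put $\fnt{Q}(\M)$ equal to the $\Sigma$-algebra carried by $\mathrm{Fix}(s^{\M})=p_1^{\M}(M)$, with $f^{\fnt{Q}(\M)}:=p_1^{\M}\circ t_f^{\M}$ for a fixed choice, for each $f\in\Sigma$, of a $\Gamma$-term $t_f$ delivered by (L) with $i=1$; on arrows $\fnt{Q}$ restricts $g$ to $\mathrm{Fix}(s)$, which is legitimate because $g$ preserves $s$, $p_1$ and the $t_f$. Evaluating on a generator shows $\fnt{Q}(\fnt{P}_{\Gamma}(\A))$ is precisely the diagonal copy of $\A$, so $\delta^{A}\colon\A\to\fnt{Q}(\fnt{P}_{\Gamma}(\A))$ is a natural isomorphism, giving $\fnt{Q}\circ\fnt{P}_{\Gamma}\cong\mathrm{Id}_{\baseV}$. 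In the other direction the candidate natural isomorphism is $\phi_{\M}\colon\M\to\fnt{P}_{\Gamma}(\fnt{Q}(\M))$, $x\mapsto(p_1^{\M}(x),p_2^{\M}(x))$; it is injective and surjective by the laws listed above (injectivity from $v(p_1(x),p_2(x))=x$, surjectivity by hitting $(u,w)$ with $v(u,w)$), and it is a $\Gamma$-homomorphism by another application of transfer of identities.

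The one genuinely non-formal point — and the step I expect to be the main obstacle — is verifying that $\fnt{Q}(\M)$ really lands in $\baseV$ rather than being merely \emph{some} $\Sigma$-algebra. I would handle this by showing that, because the defining operations of $\fnt{Q}$ are \emph{uniform} $\Gamma$-terms, $\fnt{Q}$ commutes with the class operators: taking fixed points of $s$ and applying $p_1\circ t_f$ proceeds coordinatewise on products, restricts to subalgebras, and (using $p_1|_{\mathrm{Fix}(s)}=\mathrm{id}$ to lift preimages) preserves surjections. Hence from $\M\in\HSP(\fnt{P}_{\Gamma}(\CN))$ we obtain $\fnt{Q}(\M)\in\HSP(\fnt{Q}(\fnt{P}_{\Gamma}(\CN)))=\HSP(\CN)=\baseV$, where the middle equality is the computation $\fnt{Q}(\fnt{P}_{\Gamma}(\N))\cong\N$ already noted. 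With both composites naturally isomorphic to the identity, $\fnt{P}_{\Gamma}$ is an equivalence.
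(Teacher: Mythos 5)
Your proposal is correct, and its second half takes a genuinely different route from the paper. The fullness argument is essentially the paper's: both extract $h$ from the given $\Gamma$-homomorphism by exploiting compatibility with the merge term $v$ and the swap $s$ (the paper takes $h=\pi_1\circ\psi\circ\delta$ and uses the identity $v((c,c),(d,d))=(c,d)$, transferred from $\CN$ to $\baseV$, to conclude $\psi=h\times h$; your observation that compatibility with $v$ forces $g_1$ to depend only on the first coordinate is the same computation), and both then use (L) to see that $h$ preserves the $\Sigma$-operations. Where you diverge is essential surjectivity. The paper never writes down a quasi-inverse: it shows instead that the essential image of $\fnt{P}_{\Gamma}$ is closed under the class operators --- $\prod\fnt{P}_{\Gamma}(\mathcal K)\cong\fnt{P}_{\Gamma}(\prod\mathcal K)$, every subalgebra of $\fnt{P}_{\Gamma}(\C)$ is of the form $\fnt{P}_{\Gamma}(\C_1)$ (a subuniverse closed under $v$ and $s$ is a full square on a subalgebra), and every surjective image of $\fnt{P}_{\Gamma}(\C)$ is $\fnt{P}_{\Gamma}(\C/\theta)$ for $\theta=\ker(g\circ\delta)$ --- so that the image contains $\HSP(\fnt{P}_{\Gamma}(\CN))=\duplicateV$. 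You instead define $\fnt{Q}=\mathrm{Fix}(s)$ with operations $p_1\circ t_f$, exhibit explicit natural isomorphisms in both directions, and then confront the one issue the paper's route sidesteps, namely that $\fnt{Q}(\M)$ must land in $\baseV$; your resolution (the term-defined $\fnt{Q}$ commutes with $\ope H$, $\ope S$ and $\ope P$, so $\fnt{Q}(\M)\in\HSP(\fnt{Q}(\fnt{P}_{\Gamma}(\CN)))=\HSP(\CN)$) is sound and is in effect the mirror image of the paper's closure argument. What your route buys is an explicit formula for the inverse equivalence and for both unit and counit --- useful, for instance, for computing the base algebra of a concretely given member of $\duplicateV$ --- and it replaces the paper's slightly delicate verification that $\ker(g\circ\delta)$ is a congruence and that $\varphi$ is well defined by a single uniform ``transfer of identities'' principle. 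The price is a larger stock of identities to transfer (the laws for $p_1$, $p_2$, $v$, $s$ and the homomorphism equations for $\phi_{\M}$), all of which do hold, since each can be verified coordinatewise on every $\fnt{P}_{\Gamma}(\N)$ with $\N\in\CN$, and every $\Gamma$-identity true on the generating class persists to all of $\duplicateV$ and, read as a pair of $\Sigma$-identities, to $\fnt{P}_{\Gamma}(\A)$ for every $\A\in\baseV$.
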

\begin{proof}
As we observed above, $\fnt{P}_{\Gamma}$ is a well-defined  and faithful functor. 
We only need to check that it is full and dense
on $\duplicateV=\mathbb{V}(\fnt{P}_{\Gamma}(\CN))$.
 To simplify notation, during this proof 
we write $\fnt{P}$ instead of~$\fnt{P}_{\Gamma}$.  

We first show that $\fnt{P}$ is full. 
Let $\A,\B\in\baseV$ and let $\psi\colon\fnt{P}(\A)\to\fnt{P}(\B)$
be a homomorphism.  
Let $h\colon\A\to\B$ be defined by $h=\pi_1^{B}\circ \psi\circ\delta^{A}$. 
We shall show that $h$ is a homomorphism and $\fnt{P}(h)=\psi$. 
By (P), we also have $h=\pi_2^{B}\circ \psi\circ\delta^{A}$.
By (M), there is a $\Gamma$-term $v$ such 
that $v^{\fnt{P}(\N)}((c,c),(d,d))=(c,d)$ for each $\N\in\CN$ and $c,d\in\N$.
Since $\A,\B\in\baseV$, 
the same equation is valid in $\A$ and $\B$. 
Hence
\begin{multline*}
\psi(a,b)=\psi(v^{\fnt{P}(\A)}((a,a),(b,b)))=v^{\fnt{P}(\B)}(\psi(a,a),\psi(b,b))\\
=(\pi_1^{\fnt{P}(\B)}(\psi(a,a)),\pi_2^{\fnt{P}(\B)}(\psi(b,b)))=(h(a),h(b)),
\end{multline*}
that is, $\psi=h\times h$.

Now let $f\in\Sigma$ be an $n$-ary operation symbol. By (L), there exist 
 an $n$-ary $\Gamma$-terms $t_1$ and $t_2$  
such that $\pi_i^{N}\circ t_i^{\fnt{P}(\N)}\circ (\delta^{N})^n=f^{\N}$ for 
$\N\in\CN$ and $i\in\{1,2\}$. 
Moreover there is a $\Gamma$-term $w$  such that
\[
w^{\fnt{P}(\N)}=v^{\fnt{P}(\N)}
\bigl(t_1^{\fnt{P}(\N)},t_2^{\fnt{P}(\N)}\bigr)
=f^{\N}\times f^{\N}
\]
for $\N\in\CN$, the corresponding
statement  holds also for each $\C$ that belongs to $\baseV$.
Hence, for $a_1,\ldots,a_n\in A$,
\begin{align*}
h(f^{\A}(a_1,\ldots,a_n))
&=\pi_1^{B}\circ \psi\circ\delta^{B}(f^{\A}(a_1,\ldots,a_n))\\
&=\pi_1^{B}(\psi((f^{\A}\times f^{\A})((a_1,a_1),\ldots,(a_n,a_n))))\\
&=\pi_1^{B}(\psi(w^{\fnt{P}(\B)}((a_1,a_1),\ldots,(a_n,a_n))))\\
&=\pi_1^{B}(w^{\fnt{P}(\B)}(\psi(a_1,a_1),\ldots,\psi(a_n,a_n)))\\
&=\pi_1^{B}((f^{\B}\times f^{\B})(\psi(a_1,a_1),\ldots,\psi(a_n,a_n)))\\
&=\pi_1^{B}((f^{\B}\times f^{\B})((h(a_1),h(a_1)),\ldots,(h(a_n),h(a_n))))\\
& =f^{\B}(h(a_1),\ldots,h(a_n)).
\end{align*}
This concludes the proof that $\fnt{P}$ is full.

It remains to show that $\fnt{P}$ is dense.
For every set of algebras  $\mathcal{K}\subseteq\baseV$,
it is easy to see that $\prod\fnt{P}(\mathcal{K})$ is isomorphic to $\fnt{P}(\prod\mathcal{K})$. 
Now let $\C\in\CN$ and let $\B\in\duplicateV$ 
be such that $\B$ is a subalgebra of $\A = \fnt{P}(\C)$. 
By (L), $\pi_1^A(B)$ and $\pi_2^A(B)$ are the universes of subalgebras $\C_1$ and $\C_2$ of $\C$.
By (P), $\pi_1^A(B)=\pi_2^A(B)$, hence $\C_1=\C_2$. 
By (M), $B=\pi_1^A(B)\times\pi_2^A(B)$. 
It follows that $\B\cong\fnt{P}(\C_1)$.

Let $\C\in\baseV$  and $\A\in \duplicateV$ and assume that 
$g\colon\fnt{P}(\C)\to \A$ is a surjective homomorphism. 
Consider 
 $q=g\circ\delta^{C}\colon \C\to \A$. 
We shall  show that $\theta:=\ker(q)$ 
is a congruence of $\C$. 
Let $f\in\Sigma$ be an $n$-ary operation 
and $(a_1,b_1),\ldots,(a_n,b_n)\in \theta$. 
We have already observed that by (L) and (M) there exists a term~$w$ such that
$w^{\fnt{P}(\C)}=f^{\C}\times f^{\C}$.
Hence
\allowdisplaybreaks
\begin{align*}
q( f^{\C}(a_1,\ldots,a_n) )
&=q( f^{\C}(a_1,\ldots,a_n), f^{\C}(a_1,\ldots,a_n))\\
&=q((f^{\C}\times f^{\C})((a_1,a_1),\ldots,(a_n,a_n)))\\
&=w^{\fnt{P}(\C)}(q(a_1,a_1),\ldots ,q (a_n,a_n))\\
&=w^{\fnt{P}(\C)}(q(b_1,b_1),\ldots ,q (b_n,b_n))\\
&=q((f^{\C}\times f^{\C})((b_1,b_1),\ldots(b_n,b_n)))=q( f^{\C}(b_1,\ldots,b_n)).
\end{align*}
Therefore $(f^{\C}(a_1,\ldots,a_n),f^{\C}(b_1,\ldots,b_n))\in\theta$. 
We claim that the map
 $\varphi\colon \fnt{P}(\C/\theta)\to\A$
 defined by $\varphi([a]_{\theta},[b]_{\theta})=g(a,b)$ 
is well defined and an isomorphism. 
First observe that if ${q(a_1)=q(a_2)}$ and $q(b_1)=q(b_2)$, then, for $i=1,2$, 
\[
g(a_i,b_i)=g(v^{\fnt{P}(\C)}((a_i,a_i),(b_i,b_i)))
=v^{\A}(g(a_i,a_i),g(b_i,b_i))
=v^{\fnt{P}(\A)}(q(a_i),q(b_i)).
\]
It follows that $\varphi$ is well defined.
 The fact that $\varphi$ is a homomorphism 
follows 
 from the fact that $h$ and $g$ 
are homomorphisms and the definition of the operations 
in~$\fnt{P}(\C/\theta)$.
\end{proof}

The structural information provided by a product representation 
for a variety  $\class V$ is of most value when additional   
properties of $\class V$ follow from it. 
Here we should distinguish between 
properties  which hold simply because there is a categorical equivalence between 
$\duplicateV$ and the base variety $\baseV$ and those which rely on the specific
algebraic form of the equivalence.  
Properties of the former type include those expressible in terms 
of injective homomorphisms, 
  which correspond to monomorphisms \cite[Section~14]{BD}, or surjective homomorphisms, which correspond to regular epimorphisms
 (note \cite[Proposition~7.37 and  Definition~7.71]{AHS}, \cite[Theorem~6.12]{BS}).  From this it follows 
easily that categorically equivalent varieties have isomorphic subvariety 
lattices---a fact well known to universal algebraists but hard to document explicitly.
In particular, assume 
that  $\Gamma$ duplicates a class of algebras $\CN$,
so that the functor $\fnt{P}_{\Gamma}$ is a categorical equivalence.
Then $\fnt{P}_{\Gamma}$  induces 
an isomorphism between the lattices
 of subvarieties of $\mathbb{V}(\CN)$
 and  of $\mathbb{V}(\fnt{P}_{\Gamma}(\CN))$. 
Moreover, $\Gamma$ also duplicates any 
 subvariety
  $\cat{K}$ of $ \mathbb{V}(\CN)$.

We now  record as a corollary to Theorem~\ref{thm:GeneralProductEquiv} 
further consequences  of the existence of a categorical equivalence.
In combination with our later results bringing product-representable 
varieties within the scope of Theorem~\ref{thm:GeneralProductEquiv}, this 
corollary provides a uniform derivation 
for results which have been proved 
 piecemeal in the literature in many 
 specific instances \cite{MPSV,RPhD, BJR11};  see also \cite{SW}.

 \begin{coro}\label{Cor:IrreducibleProjective}
Assume that $\Gamma$ duplicates a class of algebras $\CN$. 
The following statements hold for each ${\A\in\mathbb{V}(\CN)}$. 
\begin{newlist}
\item[{\upshape (a)}] $\Con  (\A)\cong \Con  (\fnt{P}_{\Gamma}(\A))$, 
where $\Con$ denotes the lattice of congruences of the corresponding algebra.
\item[{\upshape (b)}] 
$\A$ is  subdirectly irreducible if and only if $\fnt{P}_{\Gamma}(\A)$ is.
\end{newlist}
\end{coro}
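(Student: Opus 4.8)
The plan is to get part~(b) for free from part~(a). Subdirect irreducibility is determined entirely by the congruence lattice: $\A$ is subdirectly irreducible exactly when $\A$ is nontrivial and the set of congruences strictly above the identity congruence has a least element (the monolith). Both clauses are properties of the abstract lattice $\Con(\A)$ that are preserved by any lattice isomorphism, since such an isomorphism respects the bottom element and the order (and $\A$ is nontrivial iff $|\Con(\A)|>1$). So once part~(a) supplies an isomorphism $\Con(\A)\cong\Con(\fnt{P}_{\Gamma}(\A))$, assertion~(b) can be read off immediately. All the real work is therefore in~(a).

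For part~(a) I see two complementary routes. The quick categorical one is to note that in a variety the lattice $\Con(\A)$ can be recovered, up to isomorphism, from the poset of regular quotient objects of $\A$, a structure preserved by any categorical equivalence; since the remarks preceding the corollary identify surjections with regular epimorphisms, the equivalence $\fnt{P}_{\Gamma}$ of Theorem~\ref{thm:GeneralProductEquiv} then yields $\Con(\A)\cong\Con(\fnt{P}_{\Gamma}(\A))$ at once. I would, however, prefer to exhibit the isomorphism explicitly, because it is concrete and shows precisely where (L), (M) and (P) are used. For $\theta\in\Con(\A)$ I define $\hat\theta$ on $A\times A$ by declaring $((a,b),(c,d))\in\hat\theta$ iff $(a,c)\in\theta$ and $(b,d)\in\theta$. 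Because every operation of $\fnt{P}_{\Gamma}(\A)$ acts coordinatewise through a pair of $\Sigma$-terms, and $\theta$ is compatible with all $\Sigma$-terms, $\hat\theta$ is a congruence of $\fnt{P}_{\Gamma}(\A)$; and $\theta\mapsto\hat\theta$ is evidently injective and order-preserving in both directions. The only nontrivial point is surjectivity.

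Surjectivity is the crux. Given $\Psi\in\Con(\fnt{P}_{\Gamma}(\A))$ I would set $\theta=\{(a,c)\mid((a,a),(c,c))\in\Psi\}$ and verify that $\theta\in\Con(\A)$ using the $\Gamma$-term $w$ with $w^{\fnt{P}_{\Gamma}(\A)}=f^{\A}\times f^{\A}$ furnished by (L) and (M) in the proof of Theorem~\ref{thm:GeneralProductEquiv}. The containment $\hat\theta\subseteq\Psi$ then follows by applying the merging term $v$ to $((a,a),(c,c))$ and $((b,b),(d,d))$, since $v^{\fnt{P}_{\Gamma}(\A)}((a,a),(b,b))=(a,b)$. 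The reverse containment $\Psi\subseteq\hat\theta$ is the main obstacle, and it is exactly here that (P) becomes indispensable: from $((a,b),(c,d))\in\Psi$ one first obtains $((b,a),(d,c))\in\Psi$ by applying the permuting term $s$, and then applies the binary $v$ to these two $\Psi$-related elements to extract $((a,a),(c,c))\in\Psi$ and $((b,b),(d,d))\in\Psi$, i.e.\ $(a,c),(b,d)\in\theta$. Thus $\Psi=\hat\theta$. The decisive step is this recovery of a \emph{product} congruence of $\fnt{P}_{\Gamma}(\A)$ from an arbitrary one, and it is the interplay of $s$ and $v$ guaranteed by (P) and (M) that makes it succeed.
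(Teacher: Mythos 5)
Your proposal is correct, and it actually contains the paper's proof as its first, ``quick categorical'' route: the paper disposes of (a) in one sentence by invoking the correspondence between congruences and regular epimorphisms (established in the remarks immediately preceding the corollary), and then notes that (b) follows from (a), exactly as you argue via the lattice-theoretic characterisation of subdirect irreducibility. Where you go beyond the paper is the explicit isomorphism $\theta\mapsto\hat\theta$ with $\hat\theta$ the product congruence, together with the verification of surjectivity using $w$, $v$ and $s$; this is a genuinely more concrete argument that the paper does not give, and it has the merit of exhibiting the inverse map $\Psi\mapsto\{(a,c)\mid ((a,a),(c,c))\in\Psi\}$ and of showing precisely where (L), (M) and (P) are consumed (in particular that (P) is what forces an arbitrary congruence of $\fnt{P}_{\Gamma}(\A)$ to be a product congruence). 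The only point worth making explicit in a write-up is that the identities $v^{\fnt{P}_{\Gamma}(\N)}((a,b),(c,d))=(a,d)$ and $s^{\fnt{P}_{\Gamma}(\N)}(a,b)=(b,a)$, stated in (M) and (P) only for $\N\in\CN$, unwind to $\Sigma$-identities and therefore persist on every $\A\in\mathbb{V}(\CN)$ --- the same observation the paper makes for $v$ inside the proof of Theorem~\ref{thm:GeneralProductEquiv} --- and that to obtain both $((a,a),(c,c))\in\Psi$ and $((b,b),(d,d))\in\Psi$ you apply $v$ to the pair and its $s$-image in both orders. Neither point is a gap, merely a detail to spell out.
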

\begin{proof}
(a) follows directly from the relation between congruences and regular epimorphisms, and (b) is a direct consequence of (a).
\end{proof}

 Any functor that determines a categorical equivalence preserves projective objects.  
Accordingly, if $\Gamma$ duplicates $\CN$ then
$\A$ is projective in $\mathbb{V}(\CN)$ if and only if
 $\fnt{P}_{\Gamma}(\A)$ is projective in $\mathbb{V}(\fnt{P}_{\Gamma}(\CN))$.
However, categorical equivalences do not always preserve free objects. 
Nonetheless, 
the following result tells us how to use $\fnt{P}_{\Gamma}$ to describe 
free objects 
in $\mathbb{V}(\fnt{P}_{\Gamma}(\CN))$ when those in $\mathbb{V}(\CN)$ are known.. 
Results of this type were obtained for distributive bilattices 
in~\cite[Section~8]{CPOne} using natural duality techniques. 
Here we see that they stem from the product representation, 
independently of the existence or not of a natural duality.

\begin{prop}\label{Prop:FreeAlgebra}
Let $X$ be a set, $\CN$ a class of algebras with the same language and 
$\baseV=\mathbb{V}(\CN)$ be the variety generated by $\CN$. 
If $\Gamma$ duplicates $\CN$ and $\duplicateV=\mathbb{V}(\fnt{P}_{\Gamma}(\CN))$, 
then  $\fnt{F}_{\duplicateV}(X)$, the  $\duplicateV$-free algebra over $X$, 
is isomorphic to the algebra $\fnt{P}_{\Gamma}(\fnt{F}_{\baseV}
(X\times\{0,1\}))$ 
and the isomorphism is obtained by the identification $x\mapsto ((x,0),(x,1))$ for 
$x\in X$, where $\fnt{F}_{\baseV}(X\times\{0,1\})$ is the $\baseV$-free algebra  over $X\times\{0,1\}$.
\end{prop}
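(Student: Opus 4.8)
The plan is to verify directly that $\fnt{P}_{\Gamma}(\fnt{F}_{\baseV}(X\times\{0,1\}))$, equipped with the insertion $\eta$ given by $\eta(x)=((x,0),(x,1))$, satisfies the universal property that characterises $\fnt{F}_{\duplicateV}(X)$. Throughout I write $F=\fnt{F}_{\baseV}(X\times\{0,1\})$ and I exploit that, by Theorem~\ref{thm:GeneralProductEquiv}, $\fnt{P}_{\Gamma}$ is a full, faithful and dense functor from $\baseV$ to $\duplicateV$; note also that $\fnt{P}_{\Gamma}(F)\in\duplicateV$ since $F\in\baseV$. The one point to keep in mind is that the underlying-set functor of $\duplicateV$ does not match that of $\baseV$ along $\fnt{P}_{\Gamma}$: the universe of $\fnt{P}_{\Gamma}(\C)$ is $C\times C$ rather than $C$. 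This discrepancy is precisely why free objects are not transported verbatim by the equivalence, and it is what forces the set $X$ to be doubled to $X\times\{0,1\}$ on the base side. Indeed, a function out of $X$ into a universe of the form $C\times C$ is the same datum as a pair of functions $X\to C$, equivalently a single function $X\times\{0,1\}\to C$; this bijection is the engine of the whole argument.

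For existence, I would take an arbitrary $\A\in\duplicateV$ and an arbitrary map $f\colon X\to A$. By density, fix an isomorphism $\alpha\colon\A\to\fnt{P}_{\Gamma}(\C)$ for some $\C\in\baseV$, and write the two coordinates of $\alpha\circ f$ as $f_1,f_2\colon X\to C$. Freeness of $F$ over $X\times\{0,1\}$ yields a unique homomorphism $h\colon F\to\C$ with $h(x,0)=f_1(x)$ and $h(x,1)=f_2(x)$ for all $x\in X$. Setting $g=\alpha^{-1}\circ\fnt{P}_{\Gamma}(h)$ and recalling that $\fnt{P}_{\Gamma}(h)=h\times h$, a one-line computation gives $g(\eta(x))=\alpha^{-1}(h(x,0),h(x,1))=\alpha^{-1}(\alpha(f(x)))=f(x)$, so $g$ extends $f$ along $\eta$.

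For uniqueness, suppose $g'$ also extends $f$ along $\eta$. Then $\alpha\circ g'\colon\fnt{P}_{\Gamma}(F)\to\fnt{P}_{\Gamma}(\C)$, and fullness of $\fnt{P}_{\Gamma}$ supplies a homomorphism $h'\colon F\to\C$ with $\alpha\circ g'=\fnt{P}_{\Gamma}(h')=h'\times h'$. The condition $g'\circ\eta=f$ evaluated coordinatewise forces $h'(x,0)=f_1(x)$ and $h'(x,1)=f_2(x)$, so $h'$ agrees with $h$ on the generating set $X\times\{0,1\}$ of $F$; by freeness $h'=h$, whence $\alpha\circ g'=\alpha\circ g$ and therefore $g'=g$. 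This establishes the universal property, with insertion exactly $x\mapsto((x,0),(x,1))$, as claimed.

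I expect the only genuinely delicate step to be the bookkeeping around the two underlying-set functors: making the reduction to $\A\cong\fnt{P}_{\Gamma}(\C)$ precise and tracking the isomorphism $\alpha$ through the coordinate decomposition of $f$. Everything else is routine once the bijection between maps $X\to C\times C$ and maps $X\times\{0,1\}\to C$ is in place. As a conceptual cross-check, the same conclusion follows by composing adjunctions: the underlying-set functor of $\duplicateV$, restricted along $\fnt{P}_{\Gamma}$, is $\C\mapsto C\times C$, whose left adjoint (factoring through $\fnt{F}_{\baseV}$) is the doubling $X\mapsto X\times\{0,1\}$, since $X\times\{0,1\}$ is left adjoint to the squaring functor $S\mapsto S\times S$ on $\mathbf{Set}$.
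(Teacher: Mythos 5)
Your argument is correct and follows essentially the same route as the paper: both verify the universal property of the free algebra by using density of $\fnt{P}_{\Gamma}$ to reduce the target to the form $\fnt{P}_{\Gamma}(\C)$, splitting the map on $X$ into two coordinate maps (equivalently a single map on $X\times\{0,1\}$), and invoking freeness of $\fnt{F}_{\baseV}(X\times\{0,1\})$. The only cosmetic difference is that you derive uniqueness from fullness of $\fnt{P}_{\Gamma}$, whereas the paper gets it from the observation that $\{((x,0),(x,1))\mid x\in X\}$ generates $\fnt{P}_{\Gamma}(\fnt{F}_{\baseV}(X\times\{0,1\}))$.
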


\begin{proof}
It is easy to see that $\{\, ((x,0),(x,1))\mid x\in X\,\}$ is a set of generators of  the algebra $\fnt{P}_{\Gamma}(\fnt{Fr}_{\CN}(X\times\{0,1\}))$. 

Now  let $\B\in\mathbb{V}(\fnt{P}_{\Gamma}(\CN))$ and consider a map 
$f\colon \{\,((x,0),(x,1))\mid x\in X\,\}\to \B$. 
By Theorem~\ref{thm:GeneralProductEquiv}, there exists 
$\A\in\mathbb{V}(\CN)$ such that $\B\cong\fnt{P}_{\Gamma}(\A)$. 
Let us identify $\B$ with $\fnt{P}_{\Gamma}(\A)$. Let $g\colon X\times\{0,1\}\to \A$ 
be the map defined by $g(x,i)=\pi_i^{A}(f((x,0),(x,1)))$ for $i=0,1$ and $x\in X$.  
There exists a unique homomorphism
$\bar{g}\colon \fnt{Fr}_{\CN}(X\times\{0,1\})\to \A$ 
such that $g(x,i)=\bar{g}(x,i)$ for $(x,i)\in X\times\{0,1\}$. 
Let $h=\fnt{P}(\bar{g})\colon \fnt{P}_{\Gamma}(\fnt{Fr}_{\CN}(X\times\{0,1\}))\to \fnt{P}_{\Gamma}(\A) $. 
For $x\in X$,
\[
h((x,0),(x,1))=\fnt{P}(\bar{g})((x,0),(x,1)=(\bar{g}(x,0),\bar{g}(x,1))=(g(x,0),g(x,1)) 
=f((x,0),(x,1)).
\]
That is, $h$ extends $f$. 
\end{proof}

\section{Duplication in action:  interlaced 
and distributive bilattices revisited}\label{Sec:BilatticeProd}

We fix some notation. Let $\Sigma$ be a language and $f$ be an 
$n$-ary function symbol in $\Sigma$, then 
for each $m\geq n$ and $i_1,\ldots, i_n\in\{1,\ldots m\}$, we denote by  
$f^{m}_{i_1\cdots i_n}$  the $m$-ary term 
\[
f^{m}_{i_1\ldots i_n}(x_1,\ldots,x_m)=f(x_{i_1},\ldots,x_{i_m}).
\]
Similarly, $x^{m}_i$ denotes the $m$-ary term that selects the $i$th variable:  
${x^{m}_{i}(x_1,\ldots, x_m)=x_i}$.
For example, let  $\Sigmavar {\CL_u}=\{\vee,\wedge\}$  be the language of lattices.  
Then $\vee^{4}_{13}$ denotes the term 
$\vee^{4}_{13}(x_1,x_2,x_3,x_4)=x_1\vee x_3$.

Now consider the set of $\Sigmavar {\CL_u}$-pairs of terms 
\[
\Gammavari  {\BL_u}  
=\bigl\{
(\vee^{4}_{13},\wedge^{4}_{24}),
(\wedge^{4}_{13},\vee^{4}_{24}),
(\vee^{4}_{13},\vee^{4}_{24}),
(\wedge^{4}_{13},\wedge^{4}_{24}),
(x^{2}_2,x^{2}_1)
\bigr\}.
\]
We name  
\[
\vee_t=[\vee^{4}_{13},\wedge^{4}_{24}],
\ 
\wedge_t=[\wedge^{4}_{13},\vee^{4}_{24}], 
\ 
\vee_k=[\vee^{4}_{13},\vee^{4}_{24}], 
\
\wedge_k=[\wedge^{4}_{13},
\wedge^{4}_{24}], 
\mbox{ and }\neg=[x^{2}_2,x^{2}_1],
\]
to match up our newly-created operations with those in the language of $\BLU$.
We can clearly see that $\fnt{P}_{\Gammavari{\BL_u}}(\Lalg)=\Lalg\odot\Lalg$. 
The Product Representation Theorem for unbounded interlaced bilattices implies
that every $\A\in\BLU$ is isomorphic to $\fnt{P}_{\Gammavari{\BL_u}}(\Lalg)$ for some $\Lalg\in \CLU$.
Thus
$\mathbb{V}({\fnt{P}_{\Gammavari{\BL_u}}(\CLU)})=\BLU$. 
Moreover, it is known that $\fnt{P}_{\Gammavari{\BL_u}}$ determines a categorical equivalence \cite{BR11}. This follows directly from $\mathbb{V}(\fnt{P}_{\Gammavari{\BL_u}}(\CLU))=\BLU$ and Theorem~\ref{thm:GeneralProductEquiv}, by simply 
observing that $\Gammavari {\BL_u}$  duplicates 
$\CLU$.
Indeed, it is easy to see that $\Gammavari {\BL_u}$ satisfies (L) and (P). 
Observe too that, for  $\Lalg\in\CLU$
 and $a,b\in L$,
\[
((a,b)\wedge_k((a,b)\vee_t( c,d)))\vee_k((c,d)\wedge_k((a,b)\wedge_t(c,d)))=(a,b).
\]
 Hence the term 
$v(x,y)=(x\wedge_k (x\vee_{t}y))\vee_k(y\wedge_k(x\wedge_t y))$  satisfies~(M).

We can easily add bounds:  let 
$\Gammavari{\mathbf b}=\{(0,1),(1,0),(0,0),(1,1)\}$;
this is  a set of  pairs of terms  
 in the language of $\CL$ and we may then take    
$\Gammavari {\BL}=\Gammavari {\BL_u}\cup\Gammavari{\mathbf b}$.
It is straightforward to check that $\Gammavari {\BL}$ satisfies conditions (L), (M) and (P). 
Therefore $\fnt{P}_{\Gammavari {\BL}}$ determines a categorical equivalence 
between $\CL$ and $\mathbb{V}(\fnt{P}_{\Gammavari {\BL}}(\CL))=\BL$.

Lattices are not a finitely generated variety, and our product representation 
for $\BLU$ over $\CLU$ had to take $\CN=\CLU$.  For the variety $\DBU$ distributive 
bilattices the situation is different: the obvious base variety to use,
(unbounded) distributive lattices, is finitely generated.  
We now fit the product representation for $\DBU$
into our general scheme, using Theorem~\ref{thm:GeneralProductEquiv} 
as it applies to a singly generated variety.

We denote by $\CCD$ and $\CCD_u$  the varieties of 
bounded distributive lattices and of  unbounded distributive lattices, respectively.
We let $\twovar{\CCD}  $, respectively $\twovar{\CCD_u}$, 
denote the two-element algebra in $\CCD$,
respectively $\CCDU$.  In both cases we take the underlying set to have elements 
$0,1$, with $0 < 1$ and denote the corresponding non-strict order by $\leq$.
The following  well-known facts will be important later: 
\[
\CCDU = \HSP(\twovar{\CCD_u}) =\ISP(\twovar{\CCD_u}) 
\quad \text{and} \quad
\CCD = \HSP(\twovar{\CCD}) = \ISP(\twovar{\CCD}).
\]
By Theorem~\ref{thm:GeneralProductEquiv}, it follows that
$ \HSP(\fnt{P}_{\Gammavari{\BL_u}}(\twovar{\CCD_u}))
 =\ISP(\fnt{P}_{\Gammavari{\BL_u}}(\twovar{\CCD_u}))$.
Letting 
\[
\fourvar{\DB_u}
= (\{0,1\}^{2}; \vee_t,\wedge_t, \vee_t,\wedge_t, \neg)\coloneq\fnt{P}_{\Gamma_{\BL_u}}(\twovar{\CCD_u}),
\]  
we see that $\CCDU$ is categorically equivalent to 
$\ISP(\fourvar{\DB_u})=\HSP(\fourvar{\DB_u})$. 
So it remains to characterise the variety $\HSP(\fourvar{\DB_u})$.
 This is known to be the variety $\DBU$ of distributive bilattices, 
 that is, bilattices such that each of the four operations distributes over each of the other three.  
 Moreover, in \cite[Proposition~5.1]{CPOne}, we presented a proof that 
 $\ISP(\fourvar{\DB_u})=\DB_u$ that is independent of the product representation. 
Therefore $\CCDU\dupl \DBU$. 
Similarly, it follows that $\CCD\dupl\DB$, where $\DB$ is the variety of bounded distributive bilattices.

\section{Bilattices with conflation}\label{sec:Conflation}

Involutory operations are 
often
added to lattice-based 
varieties, and hence to bilattice-based varieties too, to provide 
algebraic models  which  capture  more than just notions of truth and knowledge.
We have already built in an involutory operation $\neg$ to model logical negation 
but wish also,  here and in
Section~\ref{Sec:Trilattices} too,  to allow for involutions which serve to model, 
for example, what is not known.  
To fit their intended interpretations,
such operations need to act  appropriately
with respect to the underlying order structures.
As we shall see, adding such operations influences our choice of base variety.
So we begin this section with a discussion of two  
finitely generated  
varieties, De Morgan lattices
and De Morgan algebras,
we have not encountered previously in this paper.  These will 
prove to be valuable as base varieties in due course.  In addition they
enable us to provide further illustration of the concept of duplication.

\begin{ex}[De Morgan algebras and De Morgan lattices]  
\label{ex:DM}
In Section~\ref{Sec:BilatticeProd} 
we encountered a four-element bounded bilattice,  obtained by 
duplicating the two-element bounded lattice.
We shall now compare this with another  four-element algebra, that which generates (as a prevariety) 
the variety $\DM$ of De Morgan algebras (a good reference is \cite[Chapter~XI]{BD}). 
 An algebra $\A=(A;\vee,\wedge,\sim,0,1)$ belongs to  
$\DM$ if $(A;\vee,\wedge,0,1)\in \CCD$ 
and $\sim$ is an order-reversing involution.  The variety is generated, as a prevariety, by 
 the algebra 
$\fourvar{\DM} $,   the De Morgan algebra whose  $\CCD$-reduct  
is $\twovar{\CCD}^2 $ and whose negation  
$\sim$
 interchanges the bounds  and fixes the other two elements.

We may ask whether $\fourvar {\DM}$ is a duplicate of a two-element algebra
in some naturally related base variety $\mathbb V(\N)$.
It is a consequence of Theorem~\ref{thm:GeneralProductEquiv} 
that this could only occur if $\DM$ were categorically equivalent to 
$\mathbb V(\N)$.  We note that $\DM$ is not categorically equivalent either to 
$\CCD$ or to $\CB$, the variety of Boolean algebras 
(the subvariety lattice of $\DM$ is not isomorphic to that of $\CCD$ or of~$\CB$)).
It is however quite simple to construct 
sets $\Gamma$ of pairs of terms in the languages 
 $\Sigma_{\CCD} = \{\lor,\land,0,1\}$ of~$\CCD$ or
 $\Sigma_{\CB} = \{\lor,\land,', 0,1\}$ of~$\CB$ such that 
  $\fourvar {\DM}\cong \fnt{P}_{\Gamma}(\twovar {\CCD})$ or
  $\fourvar {\DM}\cong \fnt{P}_{\Gamma}(\twovar {\CB})$.
We might take for example $\Gamma$ to be $\Gamma_1$ or~$\Gamma_2$, where
 \begin{align*}
\Gamma_{1} &=\{(\wedge^2_{13},\wedge^2_{24}),(\vee^2_{13},\vee^2_{24})), 
((')^2_2,(')^2_1),
(0,0), (1,1)\};  \\
 \Gamma_{2}&=\{(\wedge^2_{13},\vee^2_{24}),(\vee^2_{13},\wedge^2_{24}),
 (x^2_2,x^2_1),
 (0,1),(1,0)\} .
\end{align*}
It is easy to check that $\fourvar {\DM}\cong \fnt{P}_{\Gamma_1}(\twovar {\CCD})\cong \fnt{P}_{\Gamma_2}(\twovar {\CB})$. 
However
 $\Gamma_1$ satisfies (L$'$) but not (P), and $\Gamma_2$ satisfies 
 (P) but not (L$'$). So neither $\Gamma_1$ nor $\Gamma_2$ is a duplicator.
 
The unbounded
analogue of $\DM$ is the variety $\DMU$ of De Morgan lattices, 
that is, an algebra $\A=(A;\vee,\wedge,\sim)\in\DMU$ if $(A;\vee,\wedge)\in \CCDU$ and 
 and $\sim$ is an order-reversing involution.  
The variety $\DMU$ coincides with  $\ISP(\fourvar {\DM_u} )$, where 
$ \fourvar {\DM_u} = \bigl( \{ 0,1\}^{2}; \vee,  \wedge ,\sim\bigr);$ 
is the $\{0,1\}$-free reduct of $\fourvar {\DM}$
\cite[Theorem~1]{Kalman}.  
The variety $\DMU$ does not arise by duplicating either $\CCDU$ or the variety of Boolean lattices.
\end{ex}

We conclude from the above example that we should  regard the varieties $\DM$ and $\DMU$ as `atomic':  
their members are not built from simpler components by duplication.  
We shall see that 
they do have an important role to play as base varieties.

We now turn to the main topic of this section.
We consider  expansions of the varieties $\DBU$ and $\DB$ of 
(unbounded and bounded) distributive bilattices obtained by adding 
a unary operator $-$ called \emph{conflation} and required to act as an 
endomorphism for the truth lattice structure and a dual endomorphism 
for the knowledge lattice structure. 
 Customarily it has been assumed that
$-$ is an involution and that it commutes with $\neg$.   In this case
we denote the resulting expansion of~$\DBU$ by~$\DBCU$ and by $\DBC$ the expansion of $\DB$.

As indicated above,  the variety $\DBCU$ consists of algebras 
$ (A; \lor_t,\land_t,\lor_k,\land_k,\neg, -)$ 
for which the reduct without  $-$  belongs to~$\DBU$ and
  $-$ is an involution preserving $\leq_t$, reversing $\leq_k$ 
  and commuting with~$\neg$. 
The class $\DBC$ of bounded distributive 
  bilattices with conflation, where $-$ and~$\neg$ commute,  
 is defined in a similar way.  
The  product representation for $\DBCU$
was first presented in \cite[Theorem 8.3]{FKlkids}. 
What we shall do is to demonstrate how this product representation 
 for $\DBCU$,  and also that for $\DBC$ likewise, is a  particular case of our 
Theorem~\ref{thm:GeneralProductEquiv}.  
Indeed we shall see that the properties of conflation
essentially dictate what the base variety should be.  

Until further notice we work with $\DBCU$.
 We first note that we would expect to use a class 
having a reduct in unbounded distributive lattices, since that will already provide 
a  set 
$\Gammavari{\DBU}$ that satisfies (L), (M) and (P), and 
will allow us to represent the $\DBU$-reducts of algebras in $\DBCU$. 
To obtain the conflation operation in a product representation 
we need a pair of terms $(t_1,t_2)$ such that $[t_1,t_2]$ interprets as 
an involution that reverses the $k$-order. 
This forces $t_{1}(a\wedge b)=t_1(a)\vee t_1(b)$.  
This cannot be obtained with $\{\vee, \wedge\}$-terms since these 
preserve the order. 
So it is natural to add an involution to the language of $\CCDU$ 
to obtain the base variety we require.
An obvious   candidate is  to hand, namely the variety $\DMU$ of De Morgan lattices.
%
It is easy to see that 
$\Gammavari {\DBC_u}
=\Gammavari {\BL_u}\cup\{(\sim^{2}_2,\sim^{2}_1)\}$ 
satisfies (L),
since 
\[
\pi^{\four}_1 ([\sim^{2}_2,\sim^{2}_1]^{\fnt{P}_{\Gammavari {\DBC_u}}(\fourvar {\DM_u})}(a,a))
=\pi^{\four}_1 ({\sim}a, {\sim}a)={\sim}a
\]
for every $a\in \fourvar {\DM_u}$ and $\Gammavari{\BL_u}$ satisfies (L$'$).
Conditions (M) and  (P)  hold because they hold for  $\Gammavari {\BL_u}$. 
Therefore $\Gammavari {\DBC_u}$ duplicates $\DMU$.

To be able to apply Theorem~\ref{thm:GeneralProductEquiv}, 
it now only remains to prove that the variety $\DBCU$ 
coincides with $\mathbb{V}(\fnt{P}_{\Gammavari {\DBC_u}}(\DMU))$.
It is easy to see  that 
$\Svar{\DBC_u}:=\fnt{P}_{\Gammavari {\DBC_u}}(\fourvar {\DM_u} )$ 
is a bilattice with conflation and hence that 
$\mathbb{V}(\fnt{P}_{\Gammavari {\DBC_u}}(\DMU))
=\mathbb{V}(\fnt{P}_{\Gammavari {\DBC_u}}(\fourvar {\DM_u} ))\subseteq \DBCU$.
The reverse inclusion follows from the following stronger result.

\begin{prop} \label{sep-confl}
 $\DBCU = \ISP(\Svar{\DBC_u})$.
\end{prop}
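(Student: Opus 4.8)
The statement packages two inclusions. The inclusion $\ISP(\Svar{\DBC_u})\subseteq\DBCU$ is immediate, since $\Svar{\DBC_u}$ itself lies in $\DBCU$ and a variety is closed under $\ISP$. All the content therefore lies in the reverse inclusion $\DBCU\subseteq\ISP(\Svar{\DBC_u})$; once this is secured, combining it with $\ISP(\Svar{\DBC_u})\subseteq\mathbb{V}(\Svar{\DBC_u})\subseteq\DBCU$ gives equality throughout, and in particular yields the reverse inclusion $\DBCU\subseteq\mathbb{V}(\fnt{P}_{\Gammavari{\DBC_u}}(\DMU))$ promised just before the proposition. The plan is to prove $\DBCU\subseteq\ISP(\Svar{\DBC_u})$ by showing that every $\A\in\DBCU$ is isomorphic to $\fnt{P}_{\Gammavari{\DBC_u}}(\M)$ for a suitable De Morgan lattice $\M$, and then transporting the generation result $\DMU=\ISP(\fourvar{\DM_u})$ through the functor $\fnt{P}_{\Gammavari{\DBC_u}}$.

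First I would forget conflation. The $\DBU$-reduct of $\A$ lies in $\DBU\subseteq\BLU$, so by the Product Representation Theorem for unbounded interlaced bilattices it is isomorphic to $\Lalg\odot\Lalg$ for some $\Lalg=(L;\lor,\land)\in\CLU$, with $\Lalg$ distributive because $\A$ is. Working up to this isomorphism I would take $A=L\times L$ with the operations of $\Lalg\odot\Lalg$ and $\neg(a,b)=(b,a)$, and then pin down the conflation $-$. Writing $-(a,b)=(F(a,b),G(a,b))$, the requirement that $-$ commute with $\neg$ forces $G(a,b)=F(b,a)$, so $-(a,b)=(F(a,b),F(b,a))$. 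Feeding in the requirements that $-$ be an endomorphism for the $t$-structure and a dual endomorphism for the $k$-structure, and comparing first coordinates, the computation would produce the two identities
\[
F(a\lor c,b\lor d)=F(a,b)\land F(c,d)\quad\text{and}\quad F(a\lor c,b\land d)=F(a,b)\lor F(c,d).
\]
Specialising $b=d$ in both and comparing shows $F(a,b)\land F(c,b)=F(a,b)\lor F(c,b)$, so $F$ is independent of its first argument; writing ${\sim}b:=F(a,b)$ and specialising $a=c$ gives ${\sim}(b\lor d)={\sim}b\land{\sim}d$ and ${\sim}(b\land d)={\sim}b\lor{\sim}d$, while involutivity of $-$ gives ${\sim}{\sim}=\mathrm{id}$. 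Thus $\sim$ is an order-reversing involution on $\Lalg$, so $\M:=(L;\lor,\land,{\sim})\in\DMU$, and $-(a,b)=({\sim}b,{\sim}a)$ is exactly the conflation of $\fnt{P}_{\Gammavari{\DBC_u}}(\M)$; hence $\A\cong\fnt{P}_{\Gammavari{\DBC_u}}(\M)$.

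To finish I would invoke $\DMU=\ISP(\fourvar{\DM_u})$ (\cite[Theorem~1]{Kalman}) to fix an embedding $\M\hookrightarrow\fourvar{\DM_u}^{I}$. Applying $\fnt{P}_{\Gammavari{\DBC_u}}$, which carries an embedding $h$ to the embedding $h\times h$ and which preserves products (as recorded in the proof of Theorem~\ref{thm:GeneralProductEquiv}), gives an embedding $\A\cong\fnt{P}_{\Gammavari{\DBC_u}}(\M)\hookrightarrow\fnt{P}_{\Gammavari{\DBC_u}}(\fourvar{\DM_u})^{I}=\Svar{\DBC_u}^{I}$, so $\A\in\ISP(\Svar{\DBC_u})$. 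The one genuinely delicate step is the middle paragraph: everything hinges on showing that the conflation hypotheses, read off against the product representation of the bilattice reduct, force $-$ into the diagonal form $(a,b)\mapsto({\sim}b,{\sim}a)$ for a single De Morgan involution $\sim$. This is where the specific properties of $-$ (involution, $t$-endomorphism, $k$-dual endomorphism, commuting with $\neg$) are all used, and it is the analogue for conflation of the separation argument given for plain distributive bilattices in \cite[Proposition~5.1]{CPOne}.
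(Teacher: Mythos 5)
Your proof is correct, but it takes a genuinely different route from the paper's. The paper argues by pointwise separation: given $a\neq b$ in $\A\in\DBCU$, it invokes $\DBU=\ISP(\fourvar{\DB_u})$ from \cite[Proposition~5.1]{CPOne} to get a $\DBU$-homomorphism $h=(h_1,h_2)\colon\A\to\fourvar{\DB_u}$ with $h(a)\neq h(b)$, and then writes down the explicit map
$h'(c)=\bigl((h_1(c),1-h_2(-^{\A}c)),(h_2(c),1-h_1(-^{\A}c))\bigr)$,
checking directly that it is a $\DBCU$-homomorphism into $\Svar{\DBC_u}$ separating $a$ and $b$. You instead first establish the object-level product representation of $\DBCU$ over $\DMU$ --- showing that the conflation axioms, read against $\Lalg\odot\Lalg$, force $-(a,b)=({\sim}b,{\sim}a)$ for a De Morgan involution $\sim$ on the lattice factor, so that $\A\cong\fnt{P}_{\Gammavari{\DBC_u}}(\M)$ with $\M\in\DMU$ --- and then transport $\DMU=\ISP(\fourvar{\DM_u})$ through the functor. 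Your middle computation is sound: the two identities you extract are what the $t$-endomorphism and $k$-dual-endomorphism conditions give on first coordinates, the specialisation $b=d$ does force $F$ to be independent of its first argument, and the transport step works because $\fnt{P}_{\Gammavari{\DBC_u}}$ sends embeddings to embeddings and commutes with products. The trade-off: the paper's argument is lighter, needing only the separation result for $\DBU$ and no product representation theorem as input, which fits its programme of obtaining the product representation for $\DBCU$ as an \emph{output} of Theorem~\ref{thm:GeneralProductEquiv}; your argument is heavier (it essentially re-proves \cite[Theorem~8.3]{FKlkids} en route) but yields the explicit decomposition $\A\cong\fnt{P}_{\Gammavari{\DBC_u}}(\M)$ as a by-product and makes transparent why De Morgan lattices are the inevitable base variety.
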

\begin{proof} 
Let $\A\in\DBCU$ and take $a\neq b$ in $A$. By \cite[Proposition 5.1]{CPOne}, 
there exists a $\DBU$-homomorphism 
$h\colon \A\to \fourvar{\DB_u}$ such that $h(a)\neq h(b)$. 
Denote by $h_1$ and $h_2$ the unique maps from 
 $\A$ into $\{0,1\}$ such that $h(c)=(h_1(c),h_2(c))$, for $c\in A$.  
Define $h'\colon \A\to\Svar{\DBCU}$  by 
 \[
 h'(c)=\bigl( ( h_1(c),(1-h_2(-^{\A}c))),(h_2(c),(1-h_1(-^{\A}c)))\bigr)
 \]
for $c\in A$. 
Clearly $h'(a)\neq h'(b)$. To prove that $h'$ is a $\DBCU$-homomorphism, 
first observe that, since $h$ is a $\DBU$-homomorphism, 
\begin{alignat*}{2}
h_1(c\vee_t d)&=h_1(c\vee_k d)=h_1(c)\vee h_1 (d), \quad& h_1(c\wedge_t d)=h_1(c\wedge_k d)=h_1(c)\wedge h_1 (d);\\
h_2(c\vee_t d)&=h_2(c\wedge_k d)=h_2(c)\wedge h_2 (d),  \quad &h_2(c\wedge_t d)=h_2(c\vee_k d)=h_2(c)\vee h_2 (d)
\end{alignat*}
and $h_1(c)=h_2(\neg c)$.
It is then easy to see that $h'$ is a 
$\DBU$-homomorphism. 
Moreover,
\begin{align*}
h'(-^{\A}c)&=\bigl((h_1(-^{\A}c),(1-h_2(c))) , (h_2(-^{\A}c),(1-h_1(c)))\bigr)\\
&=\bigl(\sim (h_2(c),1-h_1(-^{\A}c)),\sim( h_1(c),(1-h_2(-^{\A}c))) \bigr)\\
&=[\sim^{2}_2,\sim^{2}_1]^{\Svar{\DBC_u}}\bigl( ( h_1(c),(1-h_2(-^{\A}c))),(h_2(c),(1-h_1(-^{\A}c)))\bigr)\\
&=[\sim^{2}_2,\sim^{2}_1]^{\Svar{\DBC_u}}(h(c)).
\end{align*}
Hence $h'$ is a  $\DBCU$-homomorphism.
\end{proof}

The product representation for $\DBC$ is obtained in a similar way using the 
variety $\DM$ of De Morgan algebras as a base class and
 $\Gammavari{\DBC}=\Gammavari{\DBC_u}\cup\Gamma_{\mathbf{b}}$.

We note that neither the requirement that $-$ be an involution nor 
the assumption that it  commute with~$\neg$ has been driven by applications. 
 In \cite{TwoPlus} we relax these restrictions on conflation and 
provide a product representation and a natural duality for the resulting class.

\section{Trilattices}  \label{Sec:Trilattices}

Trilattices are, loosely, algebras with three sets of lattice operations, the idea
being to model information, truth and falsity.  
An introduction to the topic from  a logical standpoint can be found in \cite{SDT,SW}.

As with bilattices, inclusion of bounds is optional.  
For illustrative purposes 
we consider the unbounded case. 
To simplify notation a little we shall omit $_u$ subscripts from our symbolic names  for
trilattice and trilattice-based varieties. 
Thus a \emph{trilattice} is an algebra 
\[
\A = (A;\vee_t ,\wedge_t ,\vee_f ,\wedge_f ,\vee_i ,\wedge_i)
\]
such that its reducts
$\A_t= (A;\vee_t ,\wedge_t)$, $\A_f=(A;\vee_f ,\wedge_f)$ and 
$\A_i=( A; \vee_i,\wedge_i)$ are lattices.  
For any  trilattice $\A$ we let $\A_{t,i}$
 denote the bilattice reduct of $\A$ obtained by removing 
 the $f$-operation, and so on.

As with bilattices, at a minimum, an interlacing condition is required in order to obtain a worthwhile structure theory.  
In  Example~\ref{Ex:IntTri} we consider interlaced trilattices.  
Here we impose the stronger restriction
of distributivity, thereby  moving into the setting of finitely generated
varieties in which a particularly amenable structure theory becomes available.
We let $\TL$ denote the variety of (unbounded) distributive trilattices, 
that is, those trilattices in which all possible distributive laws 
hold amongst the six lattice operations.

 The following examples of trilattices  introduce  notation we need  shortly.  
$\two^{++}$, $\two^{+-}$, $\two^{-+}$, $\two^{--}\in\TL$ 
denote the  trilattices whose universe is $\{0,1\}$ and such that 
\begin{alignat*}{2}
\two_i^{++}=\two_i^{+-}=\two_i^{-+}&=\  \two_i^{--}=\twovar{\CCD_u},\\ 
\two_t^{++}=\two_t^{+-}=\two_f^{++}=\two_f^{-+}=\twovar{\CCD_u}, \mbox{ and \ }
&\two_t^{-+}=\two_t^{--}=\two_f^{+-}=\two_f^{--}=\twovar{\CCD_u}^{\partial}.
\end{alignat*}

There are various ways in which  one  might want involutory 
operations on trilattices to behave, depending on the desired  
interpretation.   
The involutions considered in~\cite[Definition 5.2]{SDT} 
and~\cite[Sections 3.2--3.4]{R13} are dual endomorphisms for one lattice reduct 
 and endomorphisms for the other two
reducts. So, a \emph{$v$-involution} 
 (where $v\in\{t,f,i\}$) is an involutory operation on a trilattice 
 that reverses the $v$-lattice reduct and preserves the other two reducts.
Let $\TL_t$, $\TL_{t,f}$ and $\TL_{t,f,i}$ denote the varieties of trilattices with $t$-involution,
with $t$- and $f$-involutions, and with $t$-, $f$-  and $i$-involutions,
 respectively.  
 Clearly these three varieties cover all the cases we need to consider.  
We shall assume that all the involutions which we include 
commute with each other.

As examples of trilattices with a single involution we note that
$\four^+$ and $\four^-$  are trilattices with $t$-involution $-_{t}$ having  universe $\{0,1\}^2$ when 
we define  
\begin{gather*}
\four^+_{t}=\four^-_{t}=\twovar{\CCD_u}\times\twovar{\CCD_u}^{\partial}, \quad  \four^+_{i}=\four^-_{i}=\four^+_f=\twovar{\CCD_u}\times\twovar{\CCD_u}, \quad
\four^-_f=\twovar{\CCD_u}^{\partial}\times\twovar{\CCD_u}^{\partial};\\
-_{t}(a,b)=(b,a).
\end{gather*}

Just as a single involution led to the construction of four-element
 trilattices from two-element ones, sixteen-element trilattices arise 
 naturally from four-element ones when two involutions come into play.
 We let $\Svar{\TL_{t,f}}$ denote the trilattice with $t$- and $f$-involutions 
 with universe $(\{0,1\}^{2})^2$ whose operations are defined as follows:
\begin{gather*} 
(\Svar{\TL_{t,f}})_t =(\fourvar{\DB_u})_{t}^2, 
\quad
(\Svar{\TL_{t,f}})_f=(\fourvar{\DB_u})_{k}\times(\fourvar{\DB_u})_{k}^{\partial}, \quad 
(\Svar{\TL_{t,f}})_i=(\fourvar{\DB_u})_{k}^2;\\ 
-_t (a,b)=(\neg^{\fourvar{\DB_u}}(a),\neg^{\fourvar{\DB_u}}(b)),\\
-_f (a,b)=(b,a).
\end{gather*}

And, finally, we can encompass  
three involutions.  Let $\bf 256$ be the trilattice whose universe is 
 $(\{0,1\}^4)^{2}$ with $t$,$f$ and $i$-involutions such that 
\begin{gather*}
{\bf 256}_{t}=(\Svar{\DBC_u})_t^2,\quad {\bf 256}_{f}=(\Svar{\DBC_u})_k^2,\quad  {\bf 256}_{i}=(\Svar{\DBC_u})_k\times(\Svar{\DBC_u})_k^{\partial};\\
-_t(a,b)=(\neg^{\Svar{\DBC_u}}(a),\neg^{\Svar{\DBC_u}}(b)), \quad -_f(a,b)=(-^{\Svar{\DBC_u}}(a),-^{\Svar{\DBC_u}}(b)),  \\
{-_i(a,b)=(b,a)}.
\end{gather*}

The following lemma is the stepping-off point  for further analysis of
 trilattices by the methods of this paper.   
\begin{lem} \label{lem:ISPTrilattices}
\begin{alignat*}{4} 
\text{\upshape (i)}&& \ \TL& =\ISP(\two^{++},\two^{+-},\two^{-+},\two^{--});
\qquad \qquad & \text{\upshape (iii)}&& \ \TL_{t,f} &=\ISP(\Svar{\TL_{t,f}});\\ 
\text{\upshape (ii)}&&\  \TL_t &=\ISP(\four^{+},\four^{-});  &
\text{\upshape (iv)}&&\ \TL_{t,f,i} &=\ISP({\bf 256}).
\end{alignat*} 
\end{lem}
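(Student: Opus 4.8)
The plan is to prove each of the four ISP-representations by the same strategy, reducing everything to a separation-of-points argument and exploiting the already-established result for distributive bilattices.

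We use throughout the criterion recorded in Section~\ref{Sec:Preliminaries}: an algebra lies in $\ISP(\CN)$ precisely when the homomorphisms into the members of $\CN$ separate its elements. In each of the four parts the inclusion $\supseteq$ is a routine finite verification that every listed algebra is a distributive trilattice and that the displayed operations $-_t,-_f,-_i$ are indeed commuting $t$-, $f$- and $i$-involutions; so I would concentrate on the separating inclusion $\subseteq$. The plan is to treat (i) as the base case and to obtain (ii)--(iv) from it by one construction, applied repeatedly, that mirrors the proof of Proposition~\ref{sep-confl}.

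For (i), fix $\A\in\TL$ and $a\neq b$. The $i$-reduct $\A_i$ is a distributive lattice, so by $\CCDU=\ISP(\twovar{\CCD_u})$ there is a prime $i$-filter $U$ with, say, $a\in U\not\ni b$; write $\phi=\chi_U\colon A\to\{0,1\}$ for its characteristic map, a homomorphism of $i$-reducts onto $\twovar{\CCD_u}$. The crux is to show that $\phi$ is automatically a trilattice homomorphism onto exactly one of $\two^{++},\two^{+-},\two^{-+},\two^{--}$. Concretely, one checks that the two-class equivalence $\theta_U$ with blocks $U$ and $A\setminus U$ is compatible not only with $\vee_i,\wedge_i$ but also with $\vee_t,\wedge_t,\vee_f,\wedge_f$, so that for $\ast\in\{\vee_t,\wedge_t,\vee_f,\wedge_f\}$ the value $\phi(x\ast y)$ depends only on $\phi(x),\phi(y)$, and does so as $\vee$ or as $\wedge$ according to whether $U$ is a prime filter or a prime ideal of $\A_t$, respectively of $\A_f$. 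The four filter/ideal combinations single out exactly the four generators, and since prime $i$-filters separate the points of $\A_i$ this yields the required separating family.

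I expect this compatibility step to be the main obstacle, and it is exactly where the interlacing forced by distributivity must be used. From the distributive laws one first deduces that each of $\vee_t,\wedge_t,\vee_f,\wedge_f$ is monotone with respect to $\leq_i$, and then that a prime $i$-filter behaves, with respect to each of $\leq_t$ and $\leq_f$, either as a prime filter or as a prime ideal, with no mixed behaviour possible. In the intended product representation this is transparent---a prime $i$-filter reads off a single coordinate, and the $t$- and $f$-orders either agree with or reverse the $i$-order on that coordinate---but here it has to be extracted abstractly from the identities, and this is where the real content of the lemma lies.

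With (i) in hand, (ii)--(iv) follow by iterating the doubling device of Proposition~\ref{sep-confl}. For (ii), let $\A\in\TL_t$ and forget $-_t$ to obtain $\A'\in\TL$; given $a\neq b$, choose by (i) a separating homomorphism $h\colon\A'\to\two^{\epsilon}$, and, replacing $h$ by $h\circ(-_t)$ if necessary (which still separates $a,b$ since $-_t$ is injective), arrange that $h$ preserves the $t$-reduct. Then $h'(c)=(h(c),h(-_t c))$ maps $\A$ into $\four^{+}$ or $\four^{-}$ (the sign dictated by the $f$-behaviour of $h$), sends $-_t$ to coordinate swap, and separates $a,b$ in its first coordinate; the verification that $h'$ respects all six lattice operations is the same short computation as in Proposition~\ref{sep-confl}, using that $-_t$ reverses $\leq_t$ and preserves $\leq_f,\leq_i$. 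Parts (iii) and (iv) repeat this step, doubling along $-_f$ and then $-_i$: one forgets the newest involution, separates by the previously established case (landing in $\four^{\pm}$, then in $\Svar{\TL_{t,f}}$), and sets $h'(c)=(h(c),h(-c))$ for the newest involution $-$. The only extra bookkeeping is to confirm that the involutions already present lift correctly, and they do precisely because all the involutions commute, so each older involution becomes the coordinatewise action recorded in the definitions of $\Svar{\TL_{t,f}}$ and ${\bf 256}$. Injectivity of the new involution again preserves separation, completing the reduction.
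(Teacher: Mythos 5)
Your proposal is correct in substance and follows the same overall strategy as the paper: separation of points, the compatibility of $i$-congruences with the remaining operations as the crux of (i), and the doubling device of Proposition~\ref{sep-confl} for the parts with involutions. Two points of comparison. For (i) the paper simply quotes the fact that, by distributivity, every congruence of $\A_i$ is a congruence of $\A$ (citing earlier work), so that $\A/{\ker h}$ is a two-element trilattice and hence one of the four generators; you correctly isolate this compatibility as the real content of the lemma and sketch how to derive it from the identities, which is a legitimate, if longer, route. For (ii) and (iii) the paper does not iterate from (i): it applies $\DBU=\ISP(\fourvar{\DB_u})$ directly to the bilattice reduct $\B_{t,i}$, viewing $-_t$ as the negation, and then either passes to the quotient (for (ii)) or doubles along $-_f$ (for (iii) and (iv)); your version instead builds (ii) from (i) by doubling along $-_t$ and then iterates. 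Both work, and yours is more uniform across (ii)--(iv). One small slip to repair: the parenthetical claim that $h\circ(-_t)$ ``still separates $a,b$ since $-_t$ is injective'' is not valid as stated---injectivity of $-_t$ gives $-_ta\neq -_tb$, not $h(-_ta)\neq h(-_tb)$. The slip is harmless, because the doubled map $c\mapsto(h(c),h(-_tc))$ separates $a$ and $b$ as soon as either coordinate map does, and after your replacement the original separating homomorphism reappears as the second coordinate (since $-_t$ is an involution); but the justification should be phrased that way rather than via injectivity.
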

\begin{proof}
Let $\A\in\TL$ and take $a\neq b$ in $A$.
Then there exists a lattice homomorphism $h \colon \A_i \to~\two$ 
such that $h(a)\neq h(b)$.
The assumed distributivity of the trilattice operations ensures that, 
 for each $\A\in\TL$, a  congruence of $\A_i$ 
is a congruence of  $\A$ (see~\cite[Proposition~3.13]{BR11} 
or~\cite[Proposition 2.2]{CPOne} for a simple proof). 
Hence $\ker(h)$ is a congruence of $\A$ and $|\A/{\ker(h)}|=2$. 
Therefore $\A/{\ker(h)}$ is necessarily  isomorphic to 
$\two^{++},\two^{+-},\two^{-+},$ or $\two^{--}$, and the proof of (i) is complete.

We now prove (ii).
Let $\B\in\TL_t$ and take  $a\neq b$ in $B$.   
Then $\B_{t,i}\in\DBU=\ISP(\fourvar{\DB_u})$.
 Therefore there exists a homomorphism $h\colon \B_{t,i}\to \fourvar{\DB_u}$ 
such that $h(a)\neq h(b)$. 
As before, $\ker(h)$ is also compatible with the $f$-lattice structure.
Then $\B/\ker(h)$ is a trilattice with four elements such that 
its $t,i$ and $t,f$ reducts are isomorphic to $\four_{\DBU}$. 
Therefore  $\B/\ker(h)$ is either isomorphic to $\four^+$ or 
to $\four^-$ and the result follows.

Now let $\C\in\TL_{t,f}$ and take $a,b\in C$ such that $a\neq b$. 
Then $\C_{t,i}\in\DBU=\ISP(\fourvar{\DB_u})$, so 
there exists  a homomorphism $h\colon \C_{t,i}\to\fourvar{\DB_u}$ 
such that 
$h(a)\neq h(b)$. Since $-_{f}$ preserves the $t$-order and the $i$-order,
 and it commutes with $-_t$, it follows that $h\circ (-_{f})$ 
 is also a homomorphism from 
$\C_{t,i}$ onto  $\fourvar{\DB_u}$. 
Then the map $g\colon \C\to \Svar{\TL_{t,f}}$ defined by 
$g(a)=(h(a),h(-_{f\,}a))$ is a homomorphism from $\C$ 
to $\Svar{\TL_{t,f}}$ that separates~$a$ and~$b$.

The proof of (iv) can be carried out  in a similar way to that of (iii).
\end{proof}

From  the definition  of $\Svar{\TL_{t,f}}$ it is easy to extract a duplicator 
$\Gammavari{\TL_{t,f}}$. 
Indeed, letting
\begin{multline*}
\Gammavari{\TL_{t,f}}= 
\bigl\{ 
 \bigl((\vee_t)^4_{13},(\vee_t)^4_{24}\bigr),   
\bigl((\wedge_t)^4_{13},(\wedge_t)^4_{24}\bigr),
\bigl((\vee_k)^4_{13},(\vee_k)^4_{24}\bigr),
\bigl((\wedge_k)^4_{13},(\wedge_k)^4_{24}\bigr),
\\
   \bigl((\wedge_k)^4_{13},(\vee_k)^4_{24}\bigr),  
   \bigl((\vee_k)^4_{13},(\wedge_k)^4_{24}\bigr),
  (\neg^2_{1},\neg^2_{2}),
(x^{2}_2,x^{2}_1)
\bigr\}
\end{multline*}
we obtain
$\Svar{\TL_{t,f}}=\fnt{P}_{\Gammavari{\TL_{t,f}}}(\fourvar{\DB_u})$.

Similarly,  from the definition of  ${\bf 256}$ we can obtain a duplicator $\Gamma_{\TL_{t,f,i}}$ for
 $\{{\Svar{\DBC_u}}\}$ and such that
 ${\bf 256}=\fnt{P}_{\Gammavari{\TL_{t,f,i}}}(\Svar{\DBC_u})$.
Therefore, Theorem~\ref{thm:GeneralProductEquiv} and 
Lemma~\ref{lem:ISPTrilattices} prove that $\DBU\dupl\TL_{t,f}$ and $\DBCU\dupl\TL_{t,f,i}$.

Similar results can be obtained for interlaced trilattices without the distributivity condition.
Some results on product representations for these more general classes of 
interlaced trilattices were  presented in \cite{R13} 
(see also Example~\ref{Ex:IntTri}).

\section{Bilattices with implication-like operations}\label{Sec:Residuated}

Bilattices with implication-like operations have been quite extensively
considered in the literature (see~\cite{BR13} and the references therein). 
A natural implication in an algebra with a lattice reduct arises as 
 the adjoint of the meet operation, if  this adjoint exists. 
 Given a lattice $\Lalg$, the operation $\to$ is the adjoint (or residuum) of
$\wedge$ if, for $a,b,c\in \Lalg$, 
\[
a\wedge b \leq c \Longleftrightarrow b \leq a\to c.
\]
An algebra $(A;\vee,\wedge,\to,0,1)$ such that $(A;\vee,\wedge,0,1)\in\CCD$ 
and $\to$ is the adjoint of $\wedge$ is a \emph{Heyting algebra} 
\cite[Chapter IX]{BD}. 
We  denote the variety of Heyting algebras by $\cat H$.

Any bilattice has two lattice reducts, and hence there are 
two natural candidates for implications: 
knowledge implication $\to_k$, the adjoint of $\land_k$, 
and truth implication $\to_t$, the adjoint of $\land_t$. 
Despite their definitions being so alike these  implications  exhibit different behaviour.  
As we shall see,  constants play an important role here.

\subsection*{Bilattices with  knowledge implication}\

Let $\krB$ denote the class of bounded bilattices whose knowledge 
lattice reduct is a Heyting algebra,  with the  implication
included in the language. 
More precisely, we consider algebras of the form
$\A=(A;\lor_t,\land_t,\lor_k,\land_k,\to_k,\neg, 0_t,1_t,0_k,1_k)$,
where the reduct omitting $\to_k$
 is a bilattice and $(\land_k,\to_k)$ is 
 an adjoint pair.
Then $(A;\lor_k,\land_k,\to_k,0_k,1_k)$ belongs to  $\cat{H}$. 
 We deduce that the class of bilattices with knowledge implication
 $\krB$ is a variety.
We shall show that $\krB$ is categorically  equivalent to~$\cat{H}$. 

We first show that the class of bilattices with  knowledge implication
 naturally arises  as a duplicate of $\cat{H}$. 
Let $\A=(A;\lor_t,\land_t,\lor_k,\land_k,\to_k,\neg, 0_t,1_t,0_k,1_k)\in\krB$. 
Then there exists $\Lalg=(L;\lor,\land,0,1)\in\CL$ such that 
$\A_{\BL}$,  the bilattice reduct of $\A$, is isomorphic to $\fnt{P}_{\Gammavari{\BL}}(\Lalg)=\Lalg\odot\Lalg$. 
We  identify $\A_{\BL}$ with $\fnt{P}_{\Gammavari{\BL}}(\Lalg)$. 
Since $(\land_k,\to)$ is an adjoint pair we have, for $a,b,c\in L$, 
\begin{multline*}
a\land b\leq  c \Longleftrightarrow (a,0)\land_k(b,0)\leq_k (c,0)
\Longleftrightarrow (b,0)\leq_k (a,0)\to_k (c,0)\\
\Longleftrightarrow b\leq \pi_{1}((a,0)\to_k (c,0)).
\end{multline*}
 Therefore, the operation $\to^{\Lalg}$, defined by $x\to^{\Lalg} y= \pi_{1}((x,0)\to_k (y,0))$, 
 is the  adjoint of~$\land$ and $(L;\lor,\land,\to^{\Lalg},0,1)\in\cat{H}$. 
Moreover, it follows that $(a,b)\to_k(c,d) = (a\to^{\Lalg}c , b\to^{\Lalg} d)$.
What we have actually proved is that the set
 \[
\Gamma_{\cat{H}}=\Gamma_{\BL}\cup\{(\to^{4}_{13}, \to^{4}_{24})\}
\] 
satisfies (L), (M)  and (P)  with respect to the language of $\cat{H}$. 
Now an application of Theorem~\ref{thm:GeneralProductEquiv} 
proves our claim that $\krB$ is categorically equivalent to $\cat{H}$.

In \cite{BR13}, the authors introduced Brouwerian 
bilattices and in \cite[Theorem~2.6]{BR13} they  
 presented a product representation for these. 
The base class for their product representation is the variety $\cat{BR}$ 
of Brouwerian  lattices (also known as generalised Heyting algebras);  
this is the variety of $0$-free reducts of Heyting algebras. 
 The product representation in \cite{BR13}  implicitly relies on a 
 duplicator different from ours, {\it viz.} 
 \[
\Gamma_{\cat{BR}}=\Gamma_{\BL_u}\cup\{(\to^{4}_{13}, \wedge^{4}_{14})\}.
\]
An application of Theorem~\ref{thm:GeneralProductEquiv} proves that 
$\cat{BR}$ is categorically equivalent to the variety of Brouwerian bilattices. 
Moreover, if we consider Heyting algebras (bounded Brouwerian lattices) and the 
duplicator
\[
\Gamma'_{\cat{H}}
=\Gamma_{\BL}\cup\{(\to^{4}_{13}, \wedge^{4}_{14})\}
\]
we can easily see that Heyting algebras are categorically equivalent to bounded Brouwerian bilattices. 
This leads to a categorical equivalence between bounded Brouwerian bilattices and $\krB$ that 
 is actually a term-equivalence.

\subsection*{Bilattices with truth implication}\

Here we  consider the class $\trB$  of bounded bilattices 
for which  $\land_t$ admits an adjoint.
  More precisely, an algebra  
  $\A=(A;\lor_t,\land_t,\lor_k,\land_k,\to_t,\neg, 0_t,1_t,0_k,1_k)$ belongs to 
$\trB$ if $(A;\lor_t,\land_t,\lor _k,\land_k,\neg, 0_t,1_t,0_k,1_k)$ is a bilattice and 
$(\land_t,\to_t)$ is an adjoint pair. 
Let $b\cat{H}$ be the class of bi-Heyting algebras (see \cite{RZ} and the references therein). 
We shall 
prove that the 
$\trB$ is a duplicate of $b\cat{H}$.

 We let $\A=(A;\lor_t,\land_t,\lor_k,\land_k,\to_k,\neg, 0_t,1_t,0_k,1_k)\in\trB$, and 
identify  $\A_{\BL}$ with  identify $\A_{\BL}$ with $\Lalg\odot\Lalg$ for some 
$\Lalg=(L;\lor,\land,0,1)\in\CL$. Since $(\land_t,\to_t)$ is an adjoint pair, we have, for 
$a,b,c\in L$, 
\begin{align*}
a\land b\leq  c &\Longleftrightarrow  (a,1)\land_t (b,1)\leq_t (c,1
)\\
&
 \Longleftrightarrow  (b,1)\leq_t (a,1)\to_t (c,1)
\\
&
  \Longleftrightarrow  b\leq \pi_{1}((a,1)\to_t (c,1)) 
\shortintertext{and}
a\lor b\geq c &\Longleftrightarrow (0,a)\wedge_t (0,b)\leq_t (0,c)
\\
&
 \Longleftrightarrow (0,b)\leq_t (0,a)\to_t (0,c)\\
& 
\Longleftrightarrow b\geq \pi_{2}((0,a)\to_t (0,c)). 
\end{align*}
 Thus  the binary operations $\to^{\Lalg}$ and $\mapsto^{\Lalg}$ 
 defined by 
 $x\to^{\Lalg} y= \pi_{1}((x,1)\to_t (y,1))$ and $x\mapsto^{\Lalg} y= \pi_{2}((0,x)\to_t (0,y))$ 
 are the adjoints of $\land$ and $\lor$, respectively. 
Hence  the algebra $(L;\lor,\land,\to^{\Lalg},\mapsto^{\Lalg},0,1)$  
belongs to $b\cat{H}$. 
Moreover, the set
\[
\Gamma_{b\cat{H}}=\Gamma_{\BL}\cup\{(\to^{4}_{13}, \mapsto^{4}_{24})\} 
\]
duplicates $b\cat{H}$.
Hence an application of Theorem~\ref{thm:GeneralProductEquiv}
proves our claim that $\trB$ is categorically equivalent to~$b\cat H$.

Combining the  ideas  of this section, we observe that if a bilattice 
is such that $\wedge_t$ has an adjoint, $\to_t$, 
then $\wedge_k$ also admits an adjoint. 
Moreover,  this adjoint can be captured  as follows:   
\[
x\to_k y=((x\to_t y)\wedge_k 1_t)\vee_k (\neg( \neg x\to_t\neg y)\wedge_k 0_t).
\]

An  analysis of a third scenario in which an implication is introduced into bilattices
is performed   in Example~\ref{ex:implic}, where we 
consider implicative bilattices, as these are defined in~\cite{AA1},  
and show how  they fit into a general scheme of Boolean algebra duplicates.


\section{Further examples}\label{Sec:Examples}

This section brings a non-exhaustive selection of examples within the scope of 
the general framework for product representations set up in Section~\ref{Sec:ProdRep}.
The examples concern the adjunction of new operations of different types to
different base varieties, and the identification of appropriate duplicates of these varieties.  
We group the examples according to the variety being duplicated.
 Thanks to Theorem~\ref{thm:GeneralProductEquiv}, the varieties within any such 
group are all categorically equivalent to one another, a fact which in many cases
has not been recognised before.

\subsection*{Lattice variety  duplicates} \ 

We have already mentioned that $\BL$, $\BLU$, $\DB$ and $\DBU$ 
are duplicates of $\CL$, $\CLU$, $\CCD$, and $\CCDU$, respectively. 
We now turn to new examples.

\begin{ex}\label{Ex:Guard} [Fitting's guard operation] 
Fitting  \cite{FKlkids} introduced a binary operation on $\fourvar{\DB}$,  denoted 
$\fitimp$ and   given by 
\[
a\fitimp b
=\begin{cases} 
b  & \text{if }a\in\{(1,1),(1.0)\} , \\
(0,0) &\text{otherwise}.
\end{cases}
\]
Observe that
 $(a_1,a_2)\fitimp (b_1,b_2)= ((a_1\wedge b_1),(a_1\wedge b_2))$. 
Let $\four_{\fitimp}$ be the algebra obtained by adding the operation
``\,$\fitimp$\,'' to $\fourvar{\DB}$.
It is easily seen that 
$\Gamma_{\DB}\cup\{(\wedge^4_{13},\wedge^{4}_{14})\}$ 
is a duplicator for $\Sigmavar {\CCD}$  on $\twovar{\CCD}$. 
By Theorem~\ref{thm:GeneralProductEquiv}, 
$\mathbb{V}(\four_{\fitimp})$ is categorically equivalent to 
$\CCD$.
\end{ex}

As we observed after Theorem~\ref{thm:GeneralProductEquiv} the
equivalence between a variety of algebras and its duplicate 
determines an isomorphism between the associated  lattices of subvarieties.
Moreover, we have observed that a duplicator for a variety is also a duplicator for any of its subvarieties. 
Now we will use this observation to get new base varieties and new duplicates from known duplicators.

We have already used a duplicator of De Morgan lattices to handle 
unbounded bilattices with conflation, and noted that a similar construction is available 
in the bounded case using De Morgan algebras. 
The variety $\DM$ has two  non-trivial proper 
subvarieties:  $\KL$ (Kleene algebras) and $\CB$ (Boolean algebras).
 The generators of the non-trivial  proper subvarieties of $\DM$  
also support various additional operations.   
We show how we  can obtain duplicators to capture such operations.  
These give rise to  product representations, old and new, 
 of algebras arising from the addition 
of various operations related to the De Morgan negation.


\subsection*{Kleene algebra duplicates}\label{ex:Kleeneduplicates} \

Let $\three_{\DM}=(\{0,u,1\};\vee,\wedge,\sim,0,1)$ 
denote the De Morgan algebra whose lattice reduct 
is  the three-element chain $0<u<1$.
The class $\ISP(\three_{\DM})$  is indeed 
a subvariety of $\DM$ (that is, $\ISP(\three_{\DM})=\HSP(\three_{\DM})$).
The algebras in $\ISP(\three_{\DM})$ are called Kleene algebras. Let $\KL$ denote the variety of Kleene algebras.
 The categorical equivalence between $\DM$ and $\DBC$ restricts 
 to a categorical equivalence between $\KL$ and 
$\ISP(\fnt{P}_{\Gammavari{\DBC}}(\threevar{\DM}))
=\mathbb{V}(\fnt{P}_{\Gammavari{\DBC}}(\threevar{\DM}))$.

\begin{ex}[Negation by failure]  
 \label{ex:failure}
In \cite{RF} Ruet and Faget introduce an operation called \emph{negation-by-failure} on the 
bilattice $\Ninevar{\DB}=\fnt{P}_{\Gammavari{\BL}}(\threevar{\CCD})$ 
(where $\threevar{\CCD}$ is the three-element lattice whose universe is $\{0,u,1\}$ and $0<u<1$)  
and the operator ${\slash\colon \Ninevar{\DB}\to \Ninevar{\DB}}$ is defined by 
\[  
\slash(a_1,a_2)=
\begin{cases}
(1-a_1,a_2) & \text{if } a_1=0 \text{ or } 1, \\
(a_1,a_2) &\text{otherwise}.
\end{cases}
\]
It follows that $\slash(a_1,a_2)=({\sim}a_1,a_2)$. 

Let $\NineS$ denote $\Ninevar{\DB}$ with the operation ``$\OurSlash$'' added.
It follows that 
$
\Gammavari{\slash}=\Gamma_{\DB}\cup\{(\sim^{2}_{1},x^{2}_2)\}
$ 
duplicates $\three_{\DM}$ and that
 $\NineS=\fnt{P}_{\Gammavari{\slash}}(\three_{\DM})$.
By Theorem~\ref{thm:GeneralProductEquiv}, 
$\HSP(\NineS)$ is equivalent to the variety of Kleene algebras.
\end{ex}

\subsection*{Boolean algebra duplicates} \ 

The class $\CB$ of Boolean algebras equals  $\ISP(\twovar{\CB})$ 
where $\twovar{\CB}=(\{0,1\};\vee,\wedge,'\, ,0,1)$
 is the two-element Boolean algebra.

\begin{ex}[Implicative bilattices]  \label{ex:implic}
In~\cite{AA1},  
Arieli and Avron  
considered a special implication operator definable on a logical bilattice 
(that is, a bilattice together with a prime bifilter). 
The case of $\fourvar{\DB}$ is very special, since  $\fourvar{\DB}$ 
only admits one bifilter, 
\textit{viz.}~$\{(1,1),(1,0)\}$. In this case the implication is given by 
\[
a\aaimp b
=\begin{cases}
b &\text{if } a\in\{(1,1),(1,0)\}, \\
(1,0) &\text{otherwise}.
\end{cases}
\]
In other words, 
$(a_1,a_2)\aaimp (b_1,b_2) = (a_1'\vee b_1,a_1\wedge b_2)$. 
Let 
\[
\four_{\aaimp}=(\{0,1\}^2;\lor_t,\land_t,\lor_k,\land_k,\aaimp,0_t,1_t,0_k,1_k) 
\]
be the algebra whose
bilattice reduct is $\fourvar{\DB}$ and $\aaimp$ is as defined above.
 Any algebra in the variety  $\mathbb{V}(\four_{\aaimp})$  is called an \emph{implicative bilattice}. Setting $t$ as the term 
$t(x_1,x_2,x_3,x_4)=x_1'\vee x_3$, it follows that the set 
$\Gammavari{\aaimp}=
\Gamma_{\BL}\cup\{(t ,\wedge^{4}_{14})\}$
duplicates $\twovar{\CB}$  and 
$\four_{\aaimp}=\fnt{P}_{\Gammavari{\aaimp}}(\twovar{\CB})$. 
By Theorem~\ref{thm:GeneralProductEquiv}, 
the variety $\mathbb{V}(\four_{\aaimp})$ of implicative bilattices
is categorically equivalent to $\CB$.

If we consider the unbounded reduct 
$\fourvar{\DB_u,\aaimp}=(\{0,1\}^2; \lor_t,\land_t,\lor_k,\land_k,\aaimp)$ 
of $\four_{\aaimp}$,
the set 
$\Gamma_{\scriptscriptstyle \BL_u}\cup\{(t ,\wedge^{4}_{14})\}$ 
duplicates $\twovar{\CGB}$, 
where $\CGB$ denotes the class of generalised (lower unbounded) Boolean algebras \cite{BD}, 
and hence $\mathbb{V}(\fourvar{\DBU,\aaimp})$ is equivalent to $\CGB$
 by Theorem~\ref{thm:GeneralProductEquiv}.
This equivalence was already observed in \cite{BJR11} as a 
consequence of the product representation of Brouwerian bilattices 
 and its application to implicative bilattices.
\end{ex} 

\begin{ex}[Moore's epistemic operator]
\label{Ex:Moore}
Ginsberg's interpretation of Moore's epistemic operator ``I know that $p$'' 
is the operation $L\colon \fourvar{\DB}\to \fourvar{\DB}$ defined by $L(a_1,a_2)=(a_1 ,a_1')$.

In \cite[Proposition 4.2]{GinMod} it is proved that the algebra 
\[
\four_{L}=(\{0,1\}^2;\vee_t,\wedge_t,\vee_k,\wedge_k,\neg, L)
\] 
is  primal. 
Therefore $\ISP(\four_{L})=\mathbb{V}(\four_{L})$.
We can obtain the same result independently from the primality of $\four_L$.
 Consider  the language $\Sigma_{\CB}$ of Boolean algebras.
 Trivially
\[
\Gammavari{L}=\Gammavari{\BL}\cup\{(x^2_1,(')^{2}_1)\}
\]
duplicates $\CB$. 
Moreover $\four_{L}=\fnt{P}_{\Gammavari{L}}(\twovar{\CB})$.
\end{ex}

\begin{ex}[Negation-by-failure on $\fourvar{\DB}$]
In \cite{RF}, Ruet and Faget consider their negation-by-failure operator restricted to 
$\fourvar{\DB}$, that is, $\slash\colon \fourvar{\DB}\to \fourvar{\DB}$ is defined by 
$\slash(a_1,a_2)=(1-a_1,a_2)$. 
Let $\four_{\slash}$ be the algebra obtained by enriching the language of 
$\fourvar{\DB}$ with $\slash$. It is easy to check that $\four_{\slash}$ is a subalgebra of $\NineS$. 
Moreover, by identifying $\twovar{\CB}$ with the two-element subalgebra  of $\threevar{\DM}$,  
it follows that $\four_{\slash}=\fnt{P}_{\Gammavari{\slash}}(\twovar{\CB})$, the set $\Gamma_{\slash}$ 
duplicates $\CB$, and the class 
$\ISP(\four_{\slash})=\HSP(\four_{\slash})=\HSP(\fnt{P}_{\Gammavari{\slash}}(\twovar{\CB}))$ 
is categorically equivalent to $\CB$.
\end{ex}

\subsection*{Duplicates of residuated lattices}\

 An algebra $\A=(A;\lor,\land,\, \cdot\,,\,\ldiv\, ,\,\rdiv\, )$ is said to be 
a \emph{residuated lattice}     if  $(A;\lor,\land)$ is a lattice
 and 
$a\cdot b\leq c  \Longleftrightarrow  b\leq a\ldiv c \Longleftrightarrow  a\leq c\rdiv b$
 (see for example \cite{GJKO}). Let us denote the variety of residuated lattices by $\cat{RL}$.

\begin{ex}[Residuated bilattices]  
\label{Ex:ResBil}
In \cite{JanR}, the authors defined the variety $\cat{RBL}$ of residuated bilattices. 
Using the notation of the present  paper and of \cite[Theorem~3.6]{JanR}
 it follows that $\cat{RBL}=\mathbb{V}(\fnt{P}_{\Gammavari{\cat{RBL}}}(\cat{RL)})$, where
${\Gammavari{\cat{RBL}}}
=\Gamma_{\BL}\cup
\{(\ldiv^{4}_{13},\cdot^{4}_{41}),(\rdiv^{4}_{13},\cdot^{4}_{32})\}$. 
Hence, Theorem~\ref{thm:GeneralProductEquiv} implies that $\cat{RBL}$ 
is categorically equivalent to $\cat{RL}$.
\end{ex}

\subsection*{Duplicates of modal algebras}\

Let $\cat{BM}$ be the variety of bi-modal algebras. 
An algebra $(A;\lor,\land,\, ' \, ,\square_{+},\square_{-}, 0,1)\in \cat{BM}$
if and only if $(A;\lor,\land,\, ' \, ,0,1)$ is a Boolean algebra and  
$\square_{+},\square_{-}\colon A\to A$ preserve finite meets.

\begin{ex}[Modal bilattices]  
\label{Ex:ModalBil}In \cite{JR13}, the authors studied a modal expansion of implicative bilattices. 
They presented a product representation for implicative bilattices with a modal operator. 
 An algebra $\A=(A;\lor_t,\land_t,\lor_k,\land_k,\aaimp,\neg,\square ,0_t,1_t,0_k,
1_k)$
is said to be 
a \emph{ modal bilattice}  if  $(A;\lor_t,\land_t,\lor_k,\land_k,\aaimp,\neg,0_t,1_t,0_k,
1_k)$  
 is an implicative lattice (see Example~\ref{ex:implic}) and 
\[
\square(1_t)=1_t,\quad \square(a\land_t b)=\square(a)\land_t\square(b),  
\quad 
\square(0_k\aaimp a )=0_k\aaimp \square(a).
\]
We denote the variety of modal bilattices by $\cat{MBL}$.

 It is easy to see that the set
 $ {\Gammavari{\cat{MBL}}}
=\Gammavari{\aaimp}\cup
\{(t_1,t_2)\}$, 
where $t_1(x_1,x_2)=\square_+(x_1)\wedge \square_{-}(x_2')$ and 
$t_2(x_1,x_2)=( \square_{+}(x_2'))'$, duplicates $\cat{BM}$. 
 The result of  \cite[Theorem~12]{JR13} proves that 
 $\cat{MBL}=\mathbb{V}(\fnt{P}_{\Gammavari{\cat{MBL}}}(\cat{BM}))$.
Hence, Theorem~\ref{thm:GeneralProductEquiv} implies that $\cat{BM}$ 
is categorically equivalent to $\cat{MBL}$.
\end{ex}

\section{Beyond 
product representation via duplication}
\label{Sec:Conclusion}

Our aim in writing this paper, as its title suggests, is to present 
 a general framework for product representations of classes of algebras. 
One may ask if Theorem~\ref{thm:GeneralProductEquiv} is the most 
general product representation we can obtain.  It is not.
In this section we indicate how
 our theorem can be extended 
in two different directions  (and in both simultaneously). 
Firstly we consider an extension to handle products which are not binary
and secondly we show how our duplication mechanism can 
be modified so that our methodology  encompasses product
representations which fall outside the scope of  duplication,  as  this appears in
 Theorem~\ref{thm:GeneralProductEquiv}. 
Our two variants will be put forward using 
a similar expository method in each case:
we first present a  pathfinder   example; 
then  we provide a modified version of
conditions (L), (M) and (P) to encompass this example; 
finally, we state the adaptation  
of Theorem~\ref{thm:GeneralProductEquiv} 
associated with the amended conditions.

Let us  consider our first modification of the product representation theorem. 
Our path\-finder example here is a new product representation for distributive trilattices. 
We have already observed that $\CCDU\dupl\DBU$ and $\DBU\dupl\TL_{t,f}$, and 
this  proves that $\TL_{t,f}$ is categorically equivalent to $\CCDU$.
This equivalence is determined by the composition of the functors
 $\fnt{P}_{\Gammavari{\DB_u}}$ and $\fnt{P}_{\Gammavari{\TL_{t,f}}}$.
Applying these two functors to  a distributive lattice $\Lalg$ would yield a 
trilattice whose universe is $L^4$ and whose operations are defined as follows:
 \begin{align*} 
(a_1,a_2,a_3,a_4) \lor_t (b_1,b_2,b_3,b_4)  & = (a_1 \lor b_1, a_2 \land b_2,a_3 \lor b_3, a_4 \land b_4), \\
(a_1,a_2,a_3,a_4) \land_t (b_1,b_2,b_3,b_4)  & = (a_1 \land b_1, a_2 \lor b_2,a_3 \land b_3, a_4 \lor b_4), \\
(a_1,a_2,a_3,a_4) \lor_f (b_1,b_2,b_3,b_4)  & = (a_1 \lor b_1, a_2 \lor b_2,a_3 \land b_3, a_4 \land b_4), \\
(a_1,a_2,a_3,a_4) \land_f (b_1,b_2,b_3,b_4)  & = (a_1 \land b_1, a_2 \land b_2,a_3 \lor b_3, a_4 \lor b_4), \\
(a_1,a_2,a_3,a_4) \lor_i   (b_1,b_2,b_3,b_4)  & = (a_1 \lor b_1, a_2 \lor b_2,a_3 \lor b_3, a_4 \lor b_4), \\
(a_1,a_2,a_3,a_4) \land_i (b_1,b_2,b_3,b_4)  & = (a_1 \land b_1, a_2 \land b_2,a_3 \land b_3, a_4 \land b_4), \\
-_{t}(a_1,a_2,a_3,a_4) &= (a_2,a_1,a_4,a_3),\\
-_{f}(a_1,a_2,a_3,a_4) &= (a_3,a_4,a_1,a_2).
\end{align*}

We shall now describe how to adapt (L),  (M) and (P) to yield a multi-factor 
product representation and thereby to obtain
$\TL_{t,f}$  directly from $\CCDU$ without going via $\DBU$. 
Again fix a class $\CN$  of $\Sigma$-algebras.
 But now let $\Gamma$ be a set of $m$-tuples of terms such that, 
 for each $\mathbf{t}=(t_1,\ldots,t_m)\in\Gamma$, 
there exists $n_{\mathbf{t}}\in\{0,1, \ldots\}$ such that $t_1,\ldots,t_m$ 
are terms on $m n_{\mathbf{t}}$ variables.  
We define 
\[
\fnt{P}^m_{\Gamma}(\N)
=(N^m; \{\mathbf{t}^{\fnt{P}^m_{\Gamma}(\N)}
\mid \mathbf{t}\in\Gamma \}),
\]
where   the operation
$\mathbf{t}\,^{\fnt{P}^m_{\Gamma}(\N)} \colon (N^m)^{n_{\mathbf{t}}}\to N^m$
 is  defined by
\[
\mathbf{t}\,^{\fnt{P}^m_{\Gamma}(\N)}(\mathbf{a}_1,\ldots, \mathbf{a}_{n_{\mathbf{t}}})
=
(t_1^{\N}(\mathbf{a}_1,\ldots, \mathbf{a}_{n_{\mathbf{t}}}),\ldots, t_m^{\N}(\mathbf{a}_1,\ldots, \mathbf{a}_{n_{\mathbf{t}}})), 
\quad
\text{for }\mathbf{a}_1,\ldots, \mathbf{a}_{n_{\mathbf{t}}}\in N^m.
\]
We extend our earlier notation  in the expected way:
given  a set $X$ we let $\delta_m^{X} \colon X\to X^{m}$ be the diagonal map 
given by  
$\delta_{m}^{X}(x)=(x,x,\ldots,x)\in X^m$ 
and, for $i\in\{1,\ldots, m\}$, 
let $\pi_i\colon X^m \to X$
 be the projection map onto the $i$th coordinate.

We consider the following generalisation of conditions (L), (M) and (P):
\begin{enumerate}  
\item[(L$_m$)] for each $n$-ary operation symbol $f\in\Sigma$ and  $i\in\{1,\ldots,m\}$ 
there exists an $n$-ary $\Gamma$-term $t$  such that 
$\pi^{N}_i\circ t^{\fnt{P}^m_{\Gamma}(\N)}\circ(\delta_{m}^{N})^{n}=f^{\N}$ 
for each $\N\in\CN$;  
\item[(M$_m$)] there exists an $m$-ary $\Gamma$-term $v$ such that 
\begin{multline*}
v^{\fnt{P}^{m}_{\Gamma}(\N)}((a^1_1,\ldots,a^1_m),\ldots,(a^m_1,\ldots,a^m_m))
=(a^1_1,a^2_2,\ldots,a^m_m)\\
\mbox{for $\N\in\CN$ and $(a^1_1,\ldots,a^1_m),\ldots,(a^m_1,\ldots,a^m_m)\in N^{m}$.}
\end{multline*}
\item[(P$_m$)] for each permutation $\sigma$ of $\{1,\ldots,m\}$ 
there exists a unary $\Gamma$-term $s_{\sigma}$ such that 
\[
s_{\sigma}^{\fnt{P}^{m}_{\Gamma}(\N)}(a_1,\ldots,a_n)
=(a_{\sigma(1)},a_{\sigma(2)},\ldots,a_{\sigma(n)})
 \quad \text{  for  $\N\in\CN$ and $a_1,\ldots,a_m\in N$.}
\]
\end{enumerate}

Observe that,  when $m=1$, the set $\Gamma$ 
consists of $\Sigma$-terms and conditions 
(M$_1$) and (P$_1$) are trivially satisfied. 
Moreover, condition (L$_1$) implies that
  $\mathbb{V}(\fnt{P}_{\Gamma}^1(\CN))$ is term-equivalent to $\mathbb{V}(\CN)$.
This justifies our observation that product representation is a generalised form of term-equivalence.

When $m=2$, conditions (L$_m$), (M$_m$) and (P$_m$) coincide with (L), (M) and (P). 
Thus Theorem~\ref{thm:GeneralProductEquiv} is a specialisation 
 of the following theorem, 
 whose proof follows using the same arguments and replacing  (L), (M) and (P) with (L$_m$), (M$_m$) and (P$_m$) 
as appropriate.

\begin{thm}\label{thm:mPR}
Let $\CN$ be a class of $\Sigma$-algebras  and  $\Gamma$  a set of $m$-tuples of 
$\Sigma$-terms.
 If $\Gamma$ satisfies {\upshape (L$_m$), (M$_{m}$)} and {\upshape (P$_m$)},
then the functor $\fnt{P}^{m}_{\Gamma}\colon \baseV\to \duplicateV$
sets up a categorical equivalence between 
$\baseV=\mathbb{V}(\CN)$ and 
 $\duplicateV=\mathbb{V}(\fnt{P}^{m}_{\Gamma}(\CN))$.
\end{thm}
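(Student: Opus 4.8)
The plan is to mimic the proof of Theorem~\ref{thm:GeneralProductEquiv} verbatim, with the binary product $A \times A$ replaced by the $m$-fold product $N^m$ and with conditions (L), (M), (P) replaced by their generalisations (L$_m$), (M$_m$), (P$_m$). As in the binary case, the functor $\fnt{P}^m_{\Gamma}$ sends $\A \mapsto \fnt{P}^m_{\Gamma}(\A)$ on objects and $h \mapsto h^m$ (the $m$-fold product map) on morphisms; since all operations of $\fnt{P}^m_{\Gamma}(\A)$ are built from $\Sigma$-terms applied coordinatewise, $h^m$ is automatically a homomorphism, so $\fnt{P}^m_{\Gamma}$ is a well-defined faithful functor. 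The substance of the proof is to check that it is \emph{full} and \emph{dense}, and these are exactly the two places where (L$_m$), (M$_m$), (P$_m$) do the work.

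For fullness, given $\psi \colon \fnt{P}^m_{\Gamma}(\A) \to \fnt{P}^m_{\Gamma}(\B)$ I would define $h = \pi_1^{B} \circ \psi \circ \delta_m^{A}$. Condition (P$_m$), applied to transpositions, guarantees that $h = \pi_i^{B} \circ \psi \circ \delta_m^{A}$ for every $i$, so $h$ does not depend on the chosen coordinate. The merging term $v$ of (M$_m$) then lets me reconstruct $\psi$ on a general tuple $(a_1,\ldots,a_m)$ from the diagonal values $\psi(\delta_m(a_j))$, yielding $\psi = h^m$; the computation is the evident $m$-ary analogue of the displayed calculation in Theorem~\ref{thm:GeneralProductEquiv}, using that the defining equation for $v$ holds throughout $\baseV$ and not merely on $\CN$. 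That $h$ is a homomorphism follows, as before, from (L$_m$): for each $f \in \Sigma$ one produces a $\Gamma$-term $w$ with $w^{\fnt{P}^m_{\Gamma}(\N)} = (f^{\N})^m$ (obtained by merging the $m$ terms supplied by (L$_m$) via $v$), whence the coordinate-extraction identity pushes $f$ through $h$.

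For density I would argue in two stages, again paralleling the binary proof. First, products are handled by the isomorphism $\prod \fnt{P}^m_{\Gamma}(\mathcal K) \cong \fnt{P}^m_{\Gamma}(\prod \mathcal K)$. Next, a subalgebra $\B \leq \fnt{P}^m_{\Gamma}(\C)$ with $\C \in \CN$: by (L$_m$) each projection $\pi_i(B)$ is the universe of a subalgebra $\C_i \leq \C$; by (P$_m$) all the $\C_i$ coincide; and by (M$_m$) the subset $B$ is closed under the merging operation, forcing $B = \pi_1(B) \times \cdots \times \pi_m(B)$, so $\B \cong \fnt{P}^m_{\Gamma}(\C_1)$. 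Finally, for a surjective $g \colon \fnt{P}^m_{\Gamma}(\C) \to \A$ one sets $q = g \circ \delta_m^{C}$ and checks, using the term $w$ above, that $\theta := \ker q$ is a congruence of $\C$; the map $\varphi \colon \fnt{P}^m_{\Gamma}(\C/\theta) \to \A$ given by $\varphi([a_1]_\theta,\ldots,[a_m]_\theta) = g(a_1,\ldots,a_m)$ is then well defined (via the merging identity for $g$) and is the desired isomorphism.

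I do not expect any genuinely new obstacle: the whole point of framing (L$_m$), (M$_m$), (P$_m$) as stated is that they are precisely the hypotheses that make each step of the $m=2$ argument go through unchanged. The one place demanding slight care is (P$_m$), which in the $m$-ary setting must be invoked for \emph{all} permutations of $\{1,\ldots,m\}$ (not just the single swap that sufficed when $m=2$): it is needed to show both that the definition of $h$ is coordinate-independent in the fullness argument and that the projections $\pi_i(B)$ collapse to a common subalgebra in the density argument. Since (P$_m$) supplies a term $s_\sigma$ for every $\sigma$, this causes no difficulty. Thus the proof is a routine transcription, and I would state it as such rather than reproduce every coordinate computation.
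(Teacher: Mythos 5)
Your proposal is correct and takes exactly the paper's approach: the paper itself gives no separate proof of Theorem~\ref{thm:mPR}, stating only that it ``follows using the same arguments'' as Theorem~\ref{thm:GeneralProductEquiv} with (L), (M), (P) replaced by (L$_m$), (M$_m$), (P$_m$), which is precisely the transcription you describe. Your one point of added care --- that (P$_m$) must supply a term for every permutation so that $h$ is coordinate-independent in the fullness argument and the projections $\pi_i(B)$ collapse to a common subalgebra in the density argument --- is the correct place where the $m$-ary case genuinely uses more than the single swap of the binary case.
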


\begin{ex}
It is easy to see that $\Gammavari{\TL_{t,f,i}}$ given by
\begin{multline*}
\Gammavari{\TL_{t,f,i}}=
\bigl\{
(\land^8_{15},\lor^8_{26},\land^8_{37},\lor^8_{48}),
(\lor^8_{15},\land^8_{26},\lor^8_{37},\land^8_{48}),
(\land^8_{15},\land^8_{26},\lor^8_{37},\lor^8_{48}),\\
(\lor^8_{15},\lor^8_{26},\land^8_{37},\land^8_{48}),
(\land^8_{15},\land^8_{26},\land^8_{37},\land^8_{48}),
(\lor^8_{15},\lor^8_{26},\lor^8_{37},\lor^8_{48}),\\
(x^4_{2},x^{4}_1,x^{4}_4,x^4_3),
(\sim^4_{2},\sim^{4}_1,\sim^{4}_4,\sim^{4}_3),
(x^4_{3},x^{4}_4,x^{4}_1,x^4_2)
\bigr\}
\end{multline*}
satisfies (L$_4$), (P$_4$) and  (M$_4$) 
with respect to
$\DMU$. 
Moreover $\mathbf{256}\cong \fnt{P}^4_{\Gammavari{\TL_{t,f,i}}}(\fourvar {\DM_u} )$. 
Combining Theorem~\ref{thm:mPR} and Lemma~\ref{lem:ISPTrilattices}(iv), 
it follows that $\DMU$ is categorically equivalent to $\TL_{t,f,i}$. 
The same result can be obtained from $\DMU\dupl\DBCU$ and $\DBCU\dupl \TL_{t,f,i}$ 
and two applications of Theorem~\ref{thm:GeneralProductEquiv}.
\end{ex}

Our presentation of our second variant of product representation starts from
consideration of the class of interlaced pre-bilattices.
An algebra $\A=(A;\lor_{t},\land_{t},\lor_{k},\land_{k})$ 
is a \emph{pre-bilattice} 
if both reducts $(A;\lor_{t},\land_{t})$ and $(A;\lor_{k},\land_{k})$ 
are lattices.
Pre-bilattices form a variety, $p\BLU$;  in fact  $p\BLU$ 
is the variety generated by the $\neg$-free   
reducts of (unbounded) bilattices.
A pre-bilattice is \emph{interlaced}  if each lattice operation is 
 monotonic with respect to the order of the  other lattice. 
There is a product representation for  
pre-bilattices (see \cite{D13} and the references therein).  
It follows the same lines as  that for bilattices, 
except that, in the absence of~$\neg$, 
 the two factors do not have to have the same universe
and the two coordinates operate independently.
We now formulate this precisely.
Let $\PP,\Q\in\CLU$.  Then $\PP\odot\Q$ is the pre-bilattice whose universe is 
$P\times Q$ and whose operations are defined by:
 \begin{alignat*}{2}
(a_1,a_2) \lor_t (b_1,b_2)  & = (a_1 \lor b_1, a_2 \land b_2), 
\qquad & 
(a_1,a_2) \lor_k (b_1,b_2)  & = (a_1 \lor b_1, a_2 \lor b_2), \\
(a_1,a_2) \land_t (b_1,b_2)  & = (a_1 \land b_1, a_2 \lor b_2),
 \qquad &
(a_1,a_2) \land _k (b_1,b_2)  & = (a_1 \land b_1, a_2 \land b_2).
\end{alignat*}
Pre-bilattices of the form $\PP \odot \Q$ are necessarily interlaced.
The product representation theorem for pre-bilattices  states that each 
interlaced pre-bilattice $\A$ is isomorphic to $\PP\odot \Q$ for some  $\PP,\Q\in\CLU$.
 Moreover this product representation can be upgraded  to a categorical equivalence 
 between $\CLU\times\CLU$ and the variety of interlaced pre-bilattices \cite[Section~5.1]{BJR11}.
 
Our next step is to modify the conditions (L), (M) and (P) to be imposed on a set
 $\Gamma$ so as to encompass the  example of pre-bilattices.
Condition (P), on permutation of coordinates, serves  to link the factors in a product. 
We want to dispense with this and to replace by  it by  a condition, (D), which
distinguishes coordinates in such a way that the factors in a product operate independently.    
We now indicate how this should work.

Let us fix a class $\CN$  of $\Sigma$-algebras and let $\Gamma$ 
be a set of pairs of $\Sigma$-terms.
Presented with two algebras $\PP,\Q\in\CN$ we want to use~$\Gamma$ 
to obtain an algebra $\PP\odotG{\Gamma}\Q$ whose universe is $P\times Q$.
Certainly condition (P) cannot be satisfied and 
the pairs of terms $(t_1,t_2)\in\Gamma$ should not combine 
elements from different coordinates.
More precisely, in order  for the operation
$[t_1,t_2]^{\PP\odotG{\Gamma}\Q} \colon (P\times Q)^{n}\to P\times Q$, 
given by
\begin{multline*}
[t_1,t_2]^{\PP\odotG{\Gamma}\Q}
((a_1,b_1),\ldots,(a_{n},b_{n}))=
(t_1^{\PP}(a_1,b_1,\ldots,a_{n},b_{n}),t_2^{\Q}(a_1,b_1,\ldots,a_{n},b_{n})),
\\
\text{for }(a_1,b_1),\ldots,(a_{n},b_{n})\in P\times Q,
\end{multline*}
where $n=n_{(t_1,t_2)}$,
to be well defined, we need  
$\Gamma$ 
to satisfy a condition that keeps the use of  the coordinates disjoint:
\begin{newlist}
\item[(D)] for each $(t_1,t_2)\in\Gamma$, 
\[
 t_1(x_1,\ldots,x_{2n})=r_1(x_{1},x_3,\ldots, x_{2n-1})\mbox{ and  }t_2(x_1,\ldots,x_{2n})=r_2(x_{2},x_4,\ldots, x_{2n}),
 \]
 for some $n$-ary $\Sigma$-terms $r_1$ and $r_2$.
\end{newlist}
Indeed, if $\Gamma$ satisfies (D) is easy to see that the algebra 
\[
\PP\odotG{\Gamma}\Q
=( P\times Q; \{ [t_1,t_2]^{\PP\odotG{\Gamma}\Q}\mid (t_1,t_2)\in\Gamma \} )
\]
is well defined whenever $\PP,\Q\in \mathbb{V}(\CN)$. 
Moreover, the functor
$\odotG{\Gamma}\colon \baseV\times \baseV\to\duplicateV$, 
where $\baseV=\mathbb{V}(\CN)$  and $\duplicateV=\mathbb{V}(\{\PP\odotG{\Gamma}\Q\mid \PP,\Q\in\CN\})$,
given by
\begin{alignat*}{3}
&\text{on objects:} & \hspace*{2.5cm}  & (\PP,\Q) \mapsto \PP\odotG{\Gamma}\Q , 
  \hspace*{2.5cm}  \phantom{\text{on objects:}}&&\\
&\text{on morphisms:}  & &  \odotG{\Gamma}(h_1,h_2) (a,b)=(h_1(a),h_2(b)).
\end{alignat*}
 is also well defined.

We now have a candidate set of conditions 
for a new product decomposition theorem.
Its proof is a straightforward modification of 
that of  Theorem~\ref{thm:GeneralProductEquiv}. 

\begin{thm}\label{thm:ICP}
Let $\CN$  be a class of $\Sigma$-algebras and  $\Gamma$ 
be a set of pairs of $\Sigma$-terms. Assume that  $\Gamma$ satisfies 
{\upshape (L), (M)} and {\upshape(D)}.  
Then the functor $\odotG{\Gamma}\colon \baseV\times \baseV\to\duplicateV$,
sets up a categorical equivalence between 
$\baseV\times \baseV$ {\upshape(}where $\baseV=\mathbb{V}(\CN)${\upshape)}  
and $\duplicateV=\mathbb{V}(\{\PP\odotG{\Gamma}\Q\mid \PP,\Q\in\CN\})$.
\end{thm}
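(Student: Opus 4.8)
The plan is to follow the template of the proof of Theorem~\ref{thm:GeneralProductEquiv}, verifying in turn that the functor $\odotG{\Gamma}$ is faithful, full and dense, but now with the coordinate-linking role previously played by condition (P) taken over by the merging term supplied by (M). Well-definedness of $\odotG{\Gamma}$ on objects is already guaranteed by (D), and on morphisms the map $\odotG{\Gamma}(h_1,h_2)=h_1\times h_2$ is a homomorphism because, by (D), every basic operation of $\PP\odotG{\Gamma}\Q$ acts on the two coordinates separately through $\Sigma$-terms. Faithfulness is immediate: distinct pairs $(h_1,h_2)$, $(h_1',h_2')$ yield distinct maps $h_1\times h_2$, $h_1'\times h_2'$ on projecting. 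The real content lies in fullness and density.

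For fullness, let $\psi\colon\PP\odotG{\Gamma}\Q\to\PP'\odotG{\Gamma}\Q'$ be a homomorphism with $\PP,\Q,\PP',\Q'\in\baseV$. Since the two factors need no longer coincide, we cannot recover a single map along a diagonal as before; instead the first step is to show that $\psi$ splits as a product $h_1\times h_2$. This is exactly where (M) does its work: taking the binary $\Gamma$-term $v$ with $v((a,b),(c,d))=(a,d)$, the identity $(a,b')=v((a,b),(a,b'))$ and its symmetric companion let us apply $\psi$ and project, concluding that $\pi_1'\circ\psi$ is independent of the second coordinate and $\pi_2'\circ\psi$ is independent of the first. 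We may then define $h_1(a)=\pi_1'(\psi(a,b))$ and $h_2(b)=\pi_2'(\psi(a,b))$ (for any choice of the suppressed coordinate), obtaining $\psi=h_1\times h_2$. To see that $h_1$ and $h_2$ are $\Sigma$-homomorphisms we invoke (L): for a basic $f$ and each $i\in\{1,2\}$ there is a $\Gamma$-term whose $i$th coordinate realises $f^{\N}$ on the diagonal of $\N\odotG{\Gamma}\N$, hence realises $f$ on the $i$th factor of $\PP\odotG{\Gamma}\Q$ by (D) and the fact that the resulting $\Sigma$-identity passes to all of $\baseV=\mathbb V(\CN)$; pushing $f^{\PP}(a_1,\dots,a_n)$ through this term, applying $\psi$, and projecting gives $h_1(f^{\PP}(a_1,\dots,a_n))=f^{\PP'}(h_1(a_1),\dots,h_1(a_n))$, and symmetrically for $h_2$. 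Note that here we genuinely need (L) for both values of $i$, since without (P) it cannot be weakened to (L$'$).

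For density it suffices to show that the class $\{\PP\odotG{\Gamma}\Q\mid\PP,\Q\in\baseV\}$ is closed, up to isomorphism, under the formation of homomorphic images, subalgebras and products, since then it contains $\duplicateV=\HSP(\{\PP\odotG{\Gamma}\Q\mid\PP,\Q\in\CN\})$. Products cause no trouble: $\prod_i(\PP_i\odotG{\Gamma}\Q_i)\cong(\prod_i\PP_i)\odotG{\Gamma}(\prod_i\Q_i)$ by (D). For a subalgebra $\B$ of $\PP\odotG{\Gamma}\Q$, condition (L) shows $\pi_1(B)$ and $\pi_2(B)$ are universes of subalgebras, and closure under $v$ forces $(a,b),(c,d)\in B\Rightarrow(a,d)=v((a,b),(c,d))\in B$, whence $B=\pi_1(B)\times\pi_2(B)$ and $\B$ is itself a $\odotG{\Gamma}$-product; this replaces the step $\pi_1(B)=\pi_2(B)$ that (P) provided before. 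For quotients the decisive claim is that every congruence $\theta$ of $\PP\odotG{\Gamma}\Q$ is a product congruence: applying $v$ to $(a,b)\mathrel\theta(a',b')$ yields $(a,b)\mathrel\theta(a',b)$ and $(a,b)\mathrel\theta(a,b')$, so the relations $\theta_1$ on $\PP$ and $\theta_2$ on $\Q$ given by $a\mathrel{\theta_1}a'\iff(a,b)\mathrel\theta(a',b)$ and $b\mathrel{\theta_2}b'\iff(a,b)\mathrel\theta(a,b')$ are well defined (independent of the fixed coordinate, again by $v$) and, using (L) as in the fullness argument, are congruences; one then checks $\theta=\theta_1\times\theta_2$, so the quotient is $(\PP/\theta_1)\odotG{\Gamma}(\Q/\theta_2)$.

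The main obstacle is precisely the coordinate-separation that (P) and the diagonal handled automatically in Theorem~\ref{thm:GeneralProductEquiv}. With the two factors decoupled, both the splitting $\psi=h_1\times h_2$ in the fullness step and the decomposition $\theta=\theta_1\times\theta_2$ in the density step must be extracted from the single merging term $v$ of (M); once that term is exploited, the remaining verifications run parallel to those of Theorem~\ref{thm:GeneralProductEquiv}.
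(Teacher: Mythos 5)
Your proposal is correct and is essentially the argument the paper intends: the paper gives no details beyond stating that the proof is a straightforward modification of that of Theorem~\ref{thm:GeneralProductEquiv}, and you have carried out exactly that modification, correctly identifying the two places where the merging term $v$ from (M) must take over the coordinate-linking role of (P) (splitting $\psi$ as $h_1\times h_2$ for fullness, and decomposing subalgebras and congruences as products for density) and correctly noting that (L) is now needed for both values of $i$.
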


Corollary~\ref{Cor:IrreducibleProjective} gives easy access to algebraic facts about 
varieties to which Theorem~\ref{thm:GeneralProductEquiv} applies. 
 A corresponding corollary to Theorem~\ref{thm:ICP} can be formulated.

\begin{table}  [b]
\begin{tabular}{|l|l |l|}
\hline
&&\\[-3.5mm]
variety& duplicate of& reference\\
\hline&&\\[-3.5mm]
bilattices & lattices & \multirow{4}{*}{Section~\ref{Sec:BilatticeProd}}\\
 $\BL$ ($\BLU$) & $\CL$ ($\CLU$)  &\\
\cline{1-2}&&\\[-3.5mm]
distributive bilattices & distributive lattices & \\
$\DB$ ($\DBU$) & $\CCD$ ($\CCDU$) &\\
\hline&&\\[-3.5mm]
distributive bilattices with conflation & De Morgan algebras (lattices) &\multirow{2}{*}{Section~\ref{sec:Conflation}}\\
$\DBC$ ($\DBCU$) &
$\DM$ ($\DMU$)&\\
\hline&&\\[-3.5mm]
distributive trilattices   & 
\hspace{-2mm}\multirow{3}{*}{\begin{tabular}{l} distributive bilattices \\
$\DBU$
\end{tabular}}
& \multirow{6}{*}{Section~\ref{Sec:Trilattices}}\\
with $t$- and $f$-involution &&\\
$\TL_{t,f}$& &\\
&&\\[-3.5mm]
\cline{1-2}&&\\[-3.5mm]
distributive trilattices  & distributive bilattices &\\
with $t$-, $f$- and $i$-involution &with conflation&\\
$\TL_{t,f,i}$& $\DBCU$&\\
&&\\[-3.5mm]
\hline&&\\[-3.5mm]
bilattices with knowledge implication & Heyting algebras &  \multirow{4}{*}{Section~\ref{Sec:Residuated}}\\
$\BL_{\to_{k}}$& $\cat{H}$&\\
&&\\[-3.5mm]
\cline{1-2}&&\\[-3.5mm]
bilattices with truth implication & bi-Heyting algebras &\\
$\BL_{\to_{t}}$& $b\cat{H}$&\\
&&\\[-3.5mm]
\hline&&\\[-3.5mm]
bilattices with guard operator& distributive lattices &  \multirow{2}{*}{Example~\ref{Ex:Guard}}\\
 $\mathbb{V}(\four_{\fitimp})$  & $\CCD$ &\\
 &&\\[-3.5mm]
 \hline&&\\[-3.5mm]
bilattices with negation by failure & Kleene algebras &  \multirow{2}{*}{Example~\ref{ex:failure} }\\
$\mathbb{V}(\NineS)$  & $\cat{KL}$ &\\
&&\\[-3.5mm]
\hline&&\\[-3.5mm]
 implicative bilattices & Boolean algebras &  \multirow{4}{*}{Example~\ref{ex:implic}}\\
 $\mathbb{V}(\four_{\aaimp})$ & $\CB$ &\\
 &&\\[-3.5mm]
 \cline{1-2}&&\\[-3.5mm]
unbounded implicative bilattices & generalised Boolean algebras &  \\
 $\mathbb{V}(\fourvar{\DB_u,\aaimp})$ & $\CGB$ &\\
 &&\\[-3.5mm]
 \hline &&\\[-3.5mm]
bilattices with Moore's  & 
\hspace{-2mm}\multirow{3}{*}{\begin{tabular}{l} 
Boolean algebras\\
 $\CB$ 
\end{tabular}}
 &\hspace{-2mm} \multirow{3}{*}{ Example~\ref{Ex:Moore} }\\
epistemic operator&&\\
$\mathbb{V}(\four_{L})$ && \\
&&\\[-3.5mm]
\hline&&\\[-3.5mm]
residuated bilattices & residuated lattices &  \multirow{2}{*}{Example~\ref{Ex:ResBil}}\\
$\cat{RL}$ & $\cat{RBL}$&\\
\hline&&\\[-3.5mm]
modal bilattices & bi-modal algebras &  \multirow{2}{*}{Example~\ref{Ex:ModalBil}} \\
$\cat{MBL}$ & $\cat{BM}$&\\[.2ex]
\hline
\end{tabular}
\\[2ex]
\caption{Varieties obtained by duplication\label{table1}}
\end{table}

\begin{ex}[Interlaced trilattices]\label{Ex:IntTri}
In \cite{R13}, Rivieccio presented product representations for the varieties of interlaced trilattices and interlaced trilattices with one involution $-_{t}$. 
A trilattice is said to be interlaced if the six lattice operations preserve each of the three orders.  

For 
(unbounded)
interlaced trilattices, $\IT$, 
we take the base variety to be $p\BLU$,
the variety of pre-bilattices, 
 and define 
\begin{multline*}
\Gammavari{\IT}=\{
((\land_t)^4_{13},(\land_t)^4_{24}),
((\lor_t)^4_{13},(\lor_t)^4_{24}),
((\lor_k)^4_{13},(\land_k)^4_{24}),
((\land_k)^4_{13},(\lor_k)^4_{24}),\\
((\land_k)^4_{13},(\land_k)^4_{24}),
((\lor_k)^4_{13},(\lor_k)^4_{24})
\}.
\end{multline*}
Then the product representation theorem for $\IT$ \cite[Theorem~3.4]{R13} can be formulated as the assertion 
 that $\IT=\mathbb{V}(p\BLU\odotG{\Gammavari{\IT}}p\BLU)$.
 Moreover, since $\Gammavari{\IT}$ certainly satisfies 
(L), (M) and (D),  Theorem~\ref{thm:ICP} implies that 
$\IT$ is categorically equivalent to $p\BLU\times p\BLU$.

Now consider 
the variety $\IT_{-_{t}}$ of interlaced trilattices with $t$-involution. 
Let $\BL_u$ be the base variety and let $\Gammavari{\IT}$  
be the following set of pairs of terms in the language of $\BL_u$:
\[
\Gammavari{\IT_{-_{t}}}=
\Gammavari{\IT}\cup\{(\neg^{2}_{1},\neg^{2}_2)\}.
\]
Then \cite[Theorem~3.6]{R13} proves that  $\IT_{-_{t}}=\mathbb{V}(\BL\odotG{\Gammavari{\IT_{-_{t}}}}\BL_u)$. By Theorem~\ref{thm:ICP}, it follows that 
$\IT_{-_{t}}$ is categorically equivalent to $\BL_u\times\BL_u$.
\end{ex}

Of course we could combine the  generalisation   to $m$-factor products  and 
the variant that allows different components in the resulting product.
 Specifically we could introduce a  condition (D$_m$) and,
 by applying  to (L$_m$), (M$_{m}$) and (P$_m$) 
the same reasoning that we used to replace (M) by (D) in Theorem~\ref{thm:ICP},
 obtain  a categorical equivalence between 
$(\mathbb{V}(\CN))^m$ and $\mathbb{V}(\{\N_1\odotG{\Gamma}\cdots\odotG{\Gamma}\N_m\mid \N_1,\ldots,\N_m\in\CN\})$.  We omit the details.
By this means we can in particular  arrive at  a direct proof that $\TL$
is categorically equivalent to 
$\CCDU \times \CCDU \times \CCDU \times \CCDU$
or that $\IT$ is categorically equivalent to $\CLU\times \CLU\times\CLU\times\CLU$.

\section*{Appendix:  summary of duplications and equivalences}

For reference, and to emphasise the uniformity of our approach to product representations across a wide range of varieties we include  two tables 
summarising our results.       

The first table covers varieties to which conditions (L), (P) and (M) of 
Section~\ref{Sec:ProdRep} apply.  Any two varieties in the same row
are categorically equivalent, and any two duplicates with a common 
base variety are equivalent to each other.  
This table may be seen as an amplified version of that given by Jung and Rivieccio \cite{JRtacl}.  We stress that we  are able to view all the examples in our table as being underpinned 
by a common syntactic mechanism.

Table~\ref{table2} serves a somewhat different purpose from Table~\ref{table1}.
It compares and contrasts the behaviour of (interlaced) trilattices with 
different numbers of involutions added, from none to three.   
We have already seen in Section~\ref{Sec:Trilattices} how Theorem~\ref{thm:GeneralProductEquiv} can be
employed to obtain categorical equivalences.  Here we focus on the use of 
the ideas in Section~\ref{Sec:Conclusion}. 

\begin{table} [ht]
\begin{tabular}{|l|l|l|}

\hline&&\\[-3.5mm]
variety& equivalent to&reference\\
\hline&&\\[-3.5mm]
distributive trilattices  
 &
 \hspace{-2mm}\multirow{3}{*}{\begin{tabular}{l} distributive lattices \\$\CCDU$ \end{tabular}}&\\
with $t$- and $f$-involution &&Theorem~\ref{thm:mPR}\\
$\TL_{t,f}$&& \\
&&\\[-3.5mm]
\hline&&\\[-3.5mm]
distributive trilattices   & 
\hspace{-2mm}\multirow{3}{*}{\begin{tabular}{l} De Morgan lattices \\
 $\DMU$
\end{tabular}}
&\\
with $t$- $f$- and $i$-involutions&&Theorem~\ref{thm:mPR}\\
$\TL_{t,f,i}$&&\\
&&\\[-3.5mm]
\hline&&\\[-3.5mm]
pre-bilattices & lattices $\times$ lattices &\hspace{-2mm}\multirow{2}{*}{\begin{tabular}{l} Theorem~\ref{thm:ICP} \end{tabular}}\\
$p\BLU$& $\CLU\times\CLU$&\\
&&\\[-3.5mm]
\hline&&\\[-3.5mm]
 & pre-bilattices  $\times$ pre-bilattices & Theorem~\ref{thm:ICP}\\
\hspace{-2mm}
\multirow{3}{*}{\begin{tabular}{l} interlaced trilattices \\
$\IT$ \end{tabular}}
& $p\BLU\times p\BLU$& \\
&OR&\\
&lattices $\times$ lattices $\times$ lattices $\times$ lattices&Theorem ~\ref{thm:ICP}
\\
& $\CLU\times\CLU\times\CLU\times\CLU$&(4-factor version) \\
\hline&&\\[-3.5mm]
\multirow{3}{*}{\begin{tabular}{l} distributive  trilattices
 \\
$\TL$ \end{tabular}}
& $p\DBU\times p\DBU$&Theorem~\ref{thm:ICP} \\
&OR&
\\
& $\CCDU\times\CCDU\times\CCDU\times\CCDU$&Theorem~\ref{thm:ICP}\\
&& (4-factor version) \\
\hline&&\\[-3.5mm]
& bilattices $\times$ bilattices&Theorem~\ref{thm:ICP}\\
interlaced trilattices  &  $\BLU\times\BLU$&
\\
with $t$-involution & OR&\\
$\IT_{-_{t}}$& lattices $\times$ lattices&Theorem~\ref{thm:ICP}\\
& $\CLU\times\CLU$&(\& Theorem~\ref{thm:GeneralProductEquiv})\\
\hline&&\\[-3.5mm]
distributive  trilattices& $\DBU\times \DBU$& \\
with $t$-involution &OR&Theorem ~\ref{thm:ICP}
\\
$\TL_{-_{t}}$ & $\CCDU\times\CCDU$& \\[1mm]
\hline
\end{tabular}
\\[2ex]
\caption{Equivalences derived from 
Theorems~\ref{thm:mPR} and~\ref{thm:ICP} 
(no bounds)
 \label{table2}}
\end{table}

\section{Funding}
This work was supported by the [European Community's] Seventh Framework Programme [FP7/2007-2013] under the Grant Agreement n. 326202 to L.M.C.

\end{document}